\newcommand{\norm}[1]{\left\|#1\right\|}
\newcommand{\R}{\mathbb{R}}
\newcommand{\N}{\mathbb{N}}
\newcommand{\C}{\mathbb{C}}
\newcommand{\abs}[1]{\left|#1\right|}
\newcommand{\F}{\mathsf{F}}
\newcommand{\NF}{\mathsf{N}}
\renewcommand{\d}{\mathsf{d}}
\newcommand{\ds}{\,\d s}
\newcommand{\dx}{\,\d x}
\newtheorem{theorem}{Theorem}[section]
\theoremstyle{definition}
\newtheorem{example}[theorem]{Example}
\newtheorem{algorithm}[theorem]{Algorithm}
\newtheorem{remark}[theorem]{Remark}
\title[Adaptive Newton Methods]{An Adaptive Newton-Method\\ Based on a Dynamical Systems Approach}
\author[M.~Amrein]{Mario Amrein}
\author[T.~P.~Wihler]{Thomas P.~Wihler}
\address{Mathematics Institute, University of Bern, CH-3012 Switzerland}
\email{wihler@math.unibe.ch}
\begin{document}

\begin{abstract}
The traditional Newton method for solving nonlinear operator equations in Banach spaces is discussed within the context of the continuous Newton method. This setting makes it possible to interpret the Newton method as a discrete dynamical system and thereby to cast it in the framework of an adaptive step size control procedure. In so doing, our goal is to reduce the chaotic behavior of the original method without losing its quadratic convergence property close to the roots. The performance of the modified scheme is illustrated with various examples from algebraic and differential equations.
\end{abstract}

\keywords{Newton-Raphson methods, continuous Newton-Raphson method, adaptive step size control, nonlinear differential equations, chaotic behavior.}

\subjclass[2010]{49M15,58C15,37D45,74H65}

\maketitle

\section{Introduction}

Let $ X, Y $ be two Banach spaces, with norms~$\|\cdot\|_X$ and~$\|\cdot\|_Y$, respectively. Given an open subset~$\Omega\subset X$, and a continuous (possibly nonlinear) operator~$\F:\,\Omega\to Y$, we are interested in finding the zeros~$x\in\Omega$ of~$\F$, i.e., we aim to solve the operator equation
\begin{equation}\label{eq:F0}
x\in\Omega:\qquad\F(x)=0.
\end{equation}
Supposing that the Fr\'echet derivative~$\F'$ of~$\F$ exists in~$\Omega$ (or in a suitable subset), the classical Newton-Raphson method for solving~\eqref{eq:F0} starts from an initial guess~$x_0\in\Omega$, and generates the iterates
\begin{equation}\label{eq:newton}
x_{n+1}=x_n+\delta_n,
\end{equation}
where the update~$\delta_n\in X$ is implicitly given by the {\em linear} equation
\[
\F'(x_n)\delta_n=-\F(x_n),
\]
for~$n\ge 0$. Naturally, we need to assume that~$\F'(x_n)$ is invertible for all~$n\ge 0$, and that~$\{x_n\}_{n\ge 0}\subset\Omega$. 

Newton's method features both local as well as global properties. On the one hand, on a {\em local} level, the scheme is often celebrated for its quadratic convergence regime `sufficiently close' to a root. From a {\em global} perspective, on the other hand, the Newton method is well-known to exhibit chaotic behavior. Indeed, the original works of Fatou~\cite{fatou} and Julia~\cite{julia}, for instance, revealed that applying the Newton method to algebraic systems of equations may result in highly complex or even fractal attractor boundaries of the associated roots. This was confirmed in the 1980s when computer graphics were employed to illustrate the theoretical results numerically; see, e.g., \cite{peitgen}.

In order to tame the chaotic behavior of Newton's method a number of different ideas have been proposed in the literature. In particular, the use of damping aiming to avoid the appearance of possibly large updates in the iterations, constitutes a popular approach in practical applications. More precisely, \eqref{eq:newton} is replaced with
\[
x_{n+1}=x_n+\alpha\delta_n,
\]
for a possibly small damping parameter~$0<\alpha<1$. More sophisticatedly, variable damping may lead to more efficient results;  see, e.g., the extensive overview~\cite{5} or \cite{DSB95,epureanu:102,Varona02} for different variations of the classical Newton scheme. The idea of adaptively adjusting the magnitude of the Newton updates has also been studied in the recent article~\cite{ScWi11}; there, following, e.g.,~\cite{neuberger,peitgen,smale}, the Newton method was identified as the numerical discretization of a specific ordinary differential equation (ODE)---the so-called continuous Newton method---by the explicit Euler scheme, with a fixed step size~$h=1$. Then, in order to tame the chaotic behavior of the Newton iterations, the idea presented in~\cite{ScWi11} is based on discretizing the continuous Newton ODE by the explicit Euler method with variable step sizes, and to combine it with a simple step size control procedure; in particular, the resulting procedure retains the optimal step size~1 whenever sensible and is able to deal with singularities in the iterations more carefully than the classical Newton scheme. In fact, numerical experiments revealed that the new method is able to generate attractors with almost smooth boundaries where the traditional Newton method produces fractal Julia sets. Moreover, the numerical tests demonstrated an improved convergence rate not matched on average by the classical Newton
method.

The goal of the present paper is to continue the work in~\cite{ScWi11} on simple algebraic systems, and to extend it to the context of general Banach spaces; in particular, nonlinear boundary value problems will be focused on, and an empirical investigation demonstrating the ability of the proposed approach to tame chaos in attractor boundaries will be provided in such situations. Furthermore, in contrast to the adaptive control mechanism in~\cite{ScWi11}, which is based on an intermediate step technique, we develop and test a pure prediction scheme in the present article. This will make it possible to compute the individual iterations much more efficiently. Indeed, this is most relevant in more complex applications such as in the numerical approximation of nonlinear ordinary and partial differential equations.

Finally, let us remark that there is a large application and
research area where methods related to the continuous version of
the Newton method are considered in the context of
nonlinear optimization. Some of these schemes count among the most
efficient ones available for special purposes; see, e.g., \cite{nocedal} and the references therein for details. 

\section{An Adaptive Newton Method} 

The aim of this section is to develop an adaptive Newton method based on a simple prediction strategy. To this end, we will first recall the continuous Newton ODE.

\subsection{Discrete vs.~Continuous Newton Method}

In order to improve the convergence behavior of the ({\em discrete}) Newton method~\eqref{eq:newton} in the case that the initial guess is far away from a root $x_{\infty}\in\Omega$, it is classical to consider a damped version of the Newton sequence. More precisely, given a possibly small~$t_n>0$, we consider the iteration
\begin{equation}
\label{damped}
x_{n+1}=x_n-t_n\F'(x_n)^{-1}\F(x_n).
\end{equation} 
Rearranging terms results in
\[
\frac{x_{n+1}-x_{n}}{t_n}=-(\F'(x_n))^{-1}\F(x_n), 
\]
we observe that~\eqref{damped} can be seen as the discretization of the initial value problem
\begin{equation}
\label{continuous}
\begin{cases} 
\begin{aligned}
\dot{x}(t)&=\NF_{\F}(x(t)), \qquad t\geq 0,\\
 x(0)&=x_0,
\end{aligned}
\end{cases}
\end{equation}
by the explicit Euler scheme with step size~$t_n$. Here, $\NF_{\F}(x)=-\F'(x)^{-1}\F(x)$ is the so-called Newton Raphson transform (NRT, for short; see~\cite{ScWi11}) of~$\F$. The system~\eqref{continuous} is called \emph{continuous Newton method}. It is noteworthy that, if $ \NF_{\F} $ is of class $C^1 $ on some neighborhood of $x_{\infty} \in \Omega$, then we have~$\mathsf{D}(\NF_{\F})(x_{\infty})=-\mathsf{Id}$. In particular, by the Poincar\'e-Ljapunow Theorem (see, e.g., \cite{12}) we conclude that each regular zero of $\F$ is located in an attracting neighborhood contained in $ \Omega $ when the NRT is applied. Furthermore, hoping that a sufficiently smooth solution of~\eqref{continuous} exists, and that $\lim_{t\to \infty}{x(t)}=x_{\infty}\in\Omega$ is well-defined with $ \F(x_{\infty})=0 $, one can readily infer that 
\begin{equation}
\label{cont}
\F(x(t))=\F(x_0)e^{-t}.
\end{equation}
The solvability of~\eqref{continuous} within the framework of Banach spaces has been addressed in~\cite{2,3}. Note that the trajectory of a solution of \eqref{continuous} either ends at the solution point $ x_{\infty} $ which is located closest to the initial value $x_{0} $, or at a some point close to a critical point $x_{c} $ with non-invertible derivative $\F' $, or at some point on the boundary $ \partial{\Omega} $ of the domain of $\F$; see \cite{10,11}. 

Given an approximation~$x_0\in\Omega$ of a solution~$x_\infty\in\Omega$ of~\eqref{eq:F0}, the basic idea in the design of the chaos-taming adaptive Newton scheme in this article is to provide some discrete dynamics which stay sufficiently close to the trajectories of the continuous Newton method leading to the root~$x_\infty$. Here, it is useful to take a global view: Instead of considering only one specific trajectory that transports an initial guess~$x_0$ to~$x_\infty$, we consider the global flow $ \Phi $ generated by the \emph{Newton-field} $ \NF_{\F} $. That is, for $ x\in \Omega $, we concentrate on the system
\begin{equation}
\label{global flow}
\begin{cases} 
\begin{aligned}
 \dot{\Phi}(t,x)&=\NF_{\F}(\Phi(t,x)), \qquad t\ge 0,\\
 \Phi(0,x)&= x.
\end{aligned}
\end{cases}
\end{equation}
For a given root~$x_\infty$ of~$\F$ we may now consider the set 
\begin{equation}
\label{domainofat}
\mathcal{A}(x_{\infty})=\left\{x_{0}\in \Omega:\, \lim_{t\to\infty}\|\Phi(t,x_0)-x_{\infty}\|_X=0\right\}
\end{equation}
of all points which belong to trajectories of~\eqref{global flow} leading to~$x_\infty$. We note that the discrete dynamics as described by the Newton iteration~\eqref{damped} are based on possibly small but non-infinitesimal step sizes~$t_n>0$. In particular, the discrete iterates approximate the continuous trajectories from~\eqref{global flow} and may therefore jump back and forth between them. The chaotic behavior of the discrete Newton method is tamed as long as the iterates stay within the {\em same} attractor~$\mathcal{A}(x_\infty)$. Here, it is important to note that this is achievable in principle as long as the step sizes~$t_n>0$ are sufficiently small. Indeed, provided that~$\Phi$ is continuous and that~$x_\infty$ is a regular zero of~$\F$ which is contained in an attractive neighbourhood~$B_\eta(x_\infty)\subset\mathcal{A}(x_\infty)$, for some~$\eta>0$, this simply follows from the fact that~$\mathcal{A}(x_\infty)$ is an open set: To see this, we choose any $ x_0 \in  \mathcal{A}(x_{\infty})$; then, there exists $t>0$ such that $ \Phi(t,x_0)\in B_{\eta}(x_{\infty}) $. The openness of $B_{\eta}(x_{\infty})$ together with the continuity of $\Phi $ implies the existence of some $\varepsilon, \delta>0 $ such that
$\Phi(t,B_{\delta}(x_0))\subset B_{\varepsilon}(\Phi(t,x_0))\subset B_{\eta}(x_\infty)$, i.e., 
$B_{\delta}(x_{0})\subset \mathcal{A}(x_{\infty}) $.


\subsection{A Prediction Strategy}

In this section we discuss the linearization of the Newton-field $ \NF_{\F} $ which will serve as a prediction strategy of the exact trajectories given in \eqref{global flow}. We propose an adaptive path-following algorithm in such a way that, for a given initial guess $ x_{0} \in \mathcal{A}(x_{\infty}) $, the iterates $\{x_{n}\}_{n\in\N}$ presumably stay within $ \mathcal{A}(x_{\infty}) $. 

To simplify matters we fix $ x(t)=\Phi(t,x_{0}) $ for $x_{0} \in \mathcal{A}(x_{\infty})$ and denote by $\hat{x} $ the linearization at $t=0$, $x(0)=x_0$, i.e.
\begin{equation}
\label{linearization}
\hat{x}(t)=x_{0}+t\dot{x}(0).
\end{equation}
By the openess of~$\mathcal{A}(x_\infty)$ we note that, for sufficiently small $t>0 $, there holds $\hat{x}(t) \in \mathcal{A}(x_{\infty}) $. 
Let us focus on the distance between $x(t)$ and its linearization~$\hat{x}(t)$:
In view of \eqref{continuous} and \eqref{linearization} we have that
\begin{equation}
\label{ab}
\begin{aligned}
\hat{x}(t)-x(t)&=\int_0^t\left(\dot{\hat{x}}(s)-\dot{x}(s)\right)\ds\\
&=\int_{0}^{t}{(\NF_{\F}(x_0)-\NF_{\F}(x(s)))\ds}\\
&=\int_{0}^{t}{(\NF_{\F}(x_0)-\NF_{\F}(x_0)e^{-s})\ds}+\int_{0}^{t}{(\NF_{\F}(x_0)e^{-s}-\NF_{\F}(x(s)))\ds}\\
&=\NF_{\F}(x_0)(t+e^{-t}-1)+I(t),
\end{aligned}
\end{equation}
with
\[
I(t)=\int_{0}^{t}{(\NF_{\F}(x_0)e^{-s}-\NF_{\F}(x(s)))\ds}.
\]
Using~\eqref{cont} we obtain
\[
\F'(x_0)^{-1}\frac{\d}{\ds}\F(x(s))
=\F'(x_0)^{-1}\frac{\d}{\ds}\left(\F(x_0)e^{-s}\right)
=-\F'(x_0)^{-1}\F(x_0)e^{-s}
=\NF_{\F}(x_0)e^{-s}.
\]
Thus, recalling~\eqref{continuous}, we get 
\begin{align*}
I(t)&=\int_{0}^{t}{\left(\F'(x_0)^{-1}\frac{\d}{\ds}\F(x(s))-\dot{x}(s)\right)}\ds\\
&=\F'(x_0)^{-1}(\F(x(t))-\F(x_0))-x(t)+x_0.
\end{align*}
A Taylor expansion for $ \F $ about~$x_0$ is given by
\begin{align*}
\F'(x_0)^{-1}&(\F(x(t))-\F(x_0))\\
&=\F'(x_0)^{-1}\left(\F(x_0)+\F'(x_0)(x(t)-x_0)+\mathcal{O}(\|x(t)-x_0\|_X^2)-\F(x_0)\right)\\
&=x(t)-x_0+\mathcal{O}(\|x(t)-x_0\|_X^2).
\end{align*}
In particular, we see that $ I(t)=\mathcal{O}(\|x(t)-x_0\|_X^2) $. Going back to \eqref{ab} we arrive at
\[
\hat{x}(t)-x(t)=\NF_{\F}(x_0)(t+e^{-t}-1)+\mathcal{O}(\|x(t)-x_0\|_X^2).
\]
We see that by neglecting the term $ \mathcal{O}(\|x(t)-x_0\|_X^2) $, the expression $ \NF_{\F}(x_0)(t+e^{-t}-1) $ is a computable quantity  and can be used as an error indicator in each iteration step. Moreover, using that~$e^{-t}=1-t+\frac12t^2+\mathcal{O}(t^3)$, it follows that
\[
\hat{x}(t)-x(t)=\frac12t^2\NF_{\F}(x_0)+\mathcal{O}(t^3)+\mathcal{O}(\|x(t)-x_0\|_X^2).
\]
Thence, fixing a tolerance $ \tau>0 $ such that
\[
\begin{aligned}
\tau &= \norm{\hat{x}(t)-x(t)}_X
=\frac{t^2}{2}\norm{\NF_{\F}(x_0)}_{X}+\mathcal{O}(t^3)+\mathcal{O}(\|x(t)-x_0\|_X^2),
\end{aligned}
\]
and ignoring the higher order approximation terms, motivates the following adaptive step size control procedure for the Newton iteration:\\

\begin{algorithm}~\label{algorithm}
Fix a tolerance $ \tau>0 $.
\begin{enumerate}[i)]
\item  Start the Newton iteration with an initial guess $ x_{0} \in \mathcal{A}(x_{\infty}) $.
\item In each iteration step $ n=0,1,2,\ldots $, compute
\begin{equation}
\label{neun}
t_{n}=\min\left(\sqrt{\frac{2\tau}{\norm{\NF_\F(x_{n})}_{X}}},1\right).
\end{equation}
\item Compute~$x_{n+1}$ based on the Newton iteration~\eqref{damped}, and go to the next step $ n\leftarrow n+1 $. 
\end{enumerate}
\end{algorithm}

\begin{remark}
The minimum in~\eqref{neun} is chosen such that~$t_n=1$ whenever possible, in particular, close to a root. This will retain the celebrated quadratic convergence property of the Newton scheme (provided that the corresponding root is simple). 
\end{remark}

\begin{remark}
Since we fix $ \tau $ a priori it might happen that the step size $t_n$ from~\eqref{neun} may be too large in the sense that the Newton sequence $\{x_{n}\}_{n\in\N } $ leaves the attractor $ \mathcal{A}(x_{\infty})$. Indeed, our Algorithm~\ref{algorithm} obviously lacks a correction strategy for the predicted step size. This is in contrast to the references~\cite{5,ScWi11} in the context of finite-dimensional algebraic systems, where the reduction of the step size may possibly be corrected in order for the iterates to stay within $ \mathcal{A}(x_{\infty})$. Evidently, however, a possible repeated reduction of the step size may strongly increase the computational complexity. Indeed, in view of solving nonlinear operator equations in infinite dimensional Banach spaces (by means of suitable discretization schemes), which are of interest in this work, a corresponding procedure might become unfeasibly expensive in practical applications. 
\end{remark}

\subsection{A Convergence Result}
We close this section by casting the rather geometrically inspired prediction path-following Algorithm~\ref{algorithm} into a framework of a global analysis. There are various approaches that have been presented in the literature. Here, we follow along the lines of~\cite{5}, and show that the residuum $\F(x_n)\to 0$ as~$n\to\infty$ if certain (quite strong) conditions hold. Specifically, we assume that, for given~$\tau>0$ and~$x_0\in \Omega$, the Newton sequence~$\{x_n\}_{n\ge 0}$ defined in~\eqref{damped} with~$t_n$ from~\eqref{neun} satisfies the following properties:
\begin{enumerate}[(A)]
\item The sequence~$\{x_n\}_{n\ge 0}$ is well-defined, i.e., in particular, for any~$n\ge 0$, we have that~$x_n\in\Omega$, and~$\F'(x_n)$ is invertible.
\item There exists a constant~$\hat{K}>0$ such that $\norm{\F'(x_n)^{-1}}_{Y\to X}\leq \hat{K} $ for all~$n\ge 0$.
\item There is a compact set~$M\subset\Omega$ as well as a constant~$\tilde{K}>0$ such that the piecewise linear trajectory connecting the points~$x_0, x_1, x_2,\ldots$ is contained in~$M$ and such that
$ \norm{\F'(x)-\F'(y)}_{X\to Y}\leq \tilde{K}\norm{x-y}_{X}$ for all  $x,y\in M $.
\end{enumerate}


\begin{theorem}\label{pr}
Let~$x_0\in\Omega$, and suppose that there exists~$\tau_0>0$ such that the properties~{\rm (A)--(C)} above are fulfilled for any~$\tau\le\tau_0$.
Then, for
\begin{equation}
\label{tau'}
0<\tau<\min\left\{\tau_0,\frac{2}{\hat{K}^2\tilde{K}^2}\inf_{n\ge 0}\|\NF_{\F}(x_n)\|_X^{-1},\hat{K}^{-1}\tilde{K}^{-1}\right\},
\end{equation}
the adaptive Newton iteration~\eqref{damped}, with~$t_n$ from~\eqref{neun}, $n\ge 0$, converges, i.e., it holds that~$\lim_{n\to\infty}\norm{\F(x_n)}_X=0$. 
\end{theorem}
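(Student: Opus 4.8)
The plan is to derive a uniform linear contraction for the residuals, $\norm{\F(x_{n+1})}_Y\le q\,\norm{\F(x_n)}_Y$ with a fixed $q\in(0,1)$, from which $\norm{\F(x_n)}_Y\le q^n\norm{\F(x_0)}_Y\to0$ follows at once. (Should $\F(x_n)=0$ for some $n$, then $\NF_{\F}(x_n)=0$, the iteration stagnates at a zero, and there is nothing left to prove; so I assume $\F(x_n)\ne0$, and hence $\delta_n:=\NF_{\F}(x_n)\ne0$, for all $n$.)

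First I would expand the residual across one step. Since $\F'(x_n)\delta_n=-\F(x_n)$ and $x_{n+1}=x_n+t_n\delta_n$, the fundamental theorem of calculus (legitimate by~(A), and by~(C), which places the segment $[x_n,x_{n+1}]$ in $M$ and makes $\F'$ Lipschitz along it) gives $\F(x_{n+1})=(1-t_n)\F(x_n)+R_n$ with $R_n=\int_0^1\bigl(\F'(x_n+\theta t_n\delta_n)-\F'(x_n)\bigr)[t_n\delta_n]\,\d\theta$. The Lipschitz bound in~(C) then yields $\norm{R_n}_Y\le\frac12\tilde{K}\,t_n^2\norm{\delta_n}_X^2$.

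The crucial observation is that the step-size rule~\eqref{neun} forces $t_n\le\sqrt{2\tau/\norm{\delta_n}_X}$ in both the clipped case $t_n=1$ (which occurs only when $\norm{\delta_n}_X\le2\tau$) and the unclipped case, hence $t_n^2\norm{\delta_n}_X\le2\tau$, and therefore $\norm{R_n}_Y\le\tilde{K}\tau\norm{\delta_n}_X\le\hat{K}\tilde{K}\tau\norm{\F(x_n)}_Y$ after using $\norm{\delta_n}_X\le\hat{K}\norm{\F(x_n)}_Y$ from~(B). Combining, $\norm{\F(x_{n+1})}_Y\le(1-t_n+\hat{K}\tilde{K}\tau)\norm{\F(x_n)}_Y$. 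Because $M$ is compact and $\F$ continuous, $D:=\sup_{n\ge0}\norm{\NF_{\F}(x_n)}_X<\infty$ (if $D=\infty$ the admissible range~\eqref{tau'} is empty and there is nothing to show), so $t_n\ge t_*:=\min\{\sqrt{2\tau/D},1\}>0$ and we may take $q:=1-t_*+\hat{K}\tilde{K}\tau$. It then remains to check $q\in(0,1)$: one has $q>0$ because $t_*\le1$ and $\tau>0$, whereas $q<1$ is equivalent to $\hat{K}\tilde{K}\tau<\min\{\sqrt{2\tau/D},1\}$, i.e.\ to $\hat{K}\tilde{K}\tau<1$ and $\hat{K}^2\tilde{K}^2\tau^2<2\tau/D$ holding simultaneously — and since $1/D=\inf_{n\ge0}\norm{\NF_{\F}(x_n)}_X^{-1}$, these are precisely the last two smallness conditions on $\tau$ in~\eqref{tau'}, the first one, $\tau<\tau_0$, merely securing (A)--(C).

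The Taylor/FTC expansion and the Lipschitz estimate for $R_n$ are entirely routine; the single point that needs care is the bookkeeping with the minimum in~\eqref{neun} — one must verify that $t_n^2\norm{\delta_n}_X\le2\tau$ holds in both regimes of the $\min$, and then that the resulting contraction factor $q$ drops below $1$ exactly under the two explicit bounds on $\tau$ appearing in~\eqref{tau'}. (As a small consistency remark, since $\F(x_n)\in Y$ the norm in the conclusion should read $\norm{\,\cdot\,}_Y$ rather than $\norm{\,\cdot\,}_X$.)
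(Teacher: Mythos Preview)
Your proof is correct and follows the same strategy as the paper's: the FTC/Taylor expansion $\F(x_{n+1})=(1-t_n)\F(x_n)+R_n$, the Lipschitz bound $\norm{R_n}_Y\le\tfrac12\tilde{K}t_n^2\norm{\delta_n}_X^2$ from~(C), and then showing the resulting contraction factor is uniformly below~$1$ using the step-size rule together with the bounds in~\eqref{tau'}. Your bookkeeping is slightly more streamlined than the paper's --- by noting $t_n^2\norm{\delta_n}_X\le 2\tau$ in both regimes of the $\min$ and then bounding $t_n\ge t_*:=\min\{\sqrt{2\tau/D},1\}$ uniformly, you avoid the paper's explicit two-case analysis and the auxiliary $\epsilon>0$ --- but the substance is identical.
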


\begin{remark}
We note that, for all~$n\ge 0$, we have that
\begin{equation}\label{eq:2014_1}
\|\NF_{\F}(x_n)\|_X\le \norm{\F'(x_n)^{-1}}_{Y\to X}\norm{\F(x_n)}_Y\le \hat{K}\sup_{x\in M}\norm{\F(x)}_Y<\infty.
\end{equation}
The last inequality follows from the fact that~$M$ is compact and that the mapping~$x\mapsto\|\F(x)\|_Y$ is continuous on~$M$. As a consequence, the set~$\{\|\F(x)\|_Y:\, x\in M\}$ is compact in~$\mathbb{R}$, and hence bounded and closed. In particular, the supremum in~\eqref{eq:2014_1} is attained and bounded. Thus, if~$\F\not\equiv 0$ on~$M$,
\[
\inf_{n\ge 0}\|\NF_{\F}(x_n)\|_X^{-1}\ge \hat{K}^{-1}\left(\sup_{x\in M}\|\F(x)\|_Y\right)^{-1}>0.
\]
Especially, it is possible to choose~$\tau>0$ in~\eqref{tau'}.
\end{remark}

\begin{remark}
We note that the assumptions in Theorem~\ref{pr} are of a theoretical nature and difficult to check in general. From a heuristic point of view, however, our result illustrates that convergence of the Newton sequence to a zero of~$\F$ is reasonable to achieve, provided that~$\tau$ is chosen sufficiently small.
\end{remark}

\begin{proof}[Proof of Theorem~\ref{pr}]
Let~$\tau>0$ satisfy~\eqref{tau'}. Then, we choose~$\epsilon>0$ such that
\[
0<\tau(1+\epsilon)^2\le\min\left\{\tau_0,\frac{2}{\hat{K}^2\tilde{K}^2}\inf_{n\ge 0}\|\NF_{\F}(x_n)\|_X^{-1},\hat{K}^{-1}\tilde{K}^{-1}\right\}.
\]
By the mean value theorem we have
\[
\F(x_1)-\F(x_0)=\F(x_0+t_0\NF_{\F}(x_0))-\F(x_0)=\left(\int_{0}^{t_0}{\F'(x_0+s\NF_{\F}(x_0))\ds}\right)\NF_{\F}(x_0).
\]
Hence,
\[
\F(x_1)=\F(x_0)(1-t_0)+\left(\int_{0}^{t_0}{\left(\F'(x_0+s\NF_{\F}(x_0))-\F'(x_0)\right)\ds}\right)\NF_{\F}(x_0).
\]
In particular, recalling condition~(C) above, we notice that the previous integrals are all well-defined. 
By definition, we have that~$t_0\in(0,1]$, and thus, employing the triangle inequality, we obtain the estimate
\begin{equation}\label{eq:13}
\begin{aligned}
\norm{\F(x_1)}_{Y}
&\le(1-t_0)\norm{\F(x_0)}_Y\\
&\quad+\norm{\int_{0}^{t_0}{\left(\F'(x_0+s\NF_{\F}(x_0))-\F'(x_0)\right)\ds}}_{X\to Y}\norm{\NF_{\F}(x_0)}_X\\
& \leq (1-t_0)\norm{\F(x_0)}_{Y}+\frac{t_0^2}{2}\tilde{K}\norm{\NF_{\F}(x_0)}_{X}^2
\leq \gamma_0\norm{\F(x_0)}_Y,
\end{aligned}
\end{equation}
where
\[
\gamma_0=1-t_0+\frac{t_0^2}{2}K\norm{\NF_{\F}(x_0)}_{X}, 
\]
for  $ K=\hat{K}\tilde{K}$. In order to estimate~$\gamma_0$, we consider two cases:

\begin{enumerate}[{Case}~1:]
\item Let first
\[
\frac{2\tau}{\norm{\NF_\F(x_0)}_X}\ge 1.
\]
Then, $t_0=1$ in~\eqref{neun}, and~$\norm{\NF_\F(x_0)}_X\le 2\tau$. Therefore,
\[
\gamma_0=\frac12K\norm{\NF_\F(x_0)}_X\le K\tau.
\]
Using that
\begin{equation}\label{eq:2014}
\tau(1+\epsilon)<\tau(1+\epsilon)^2\le K^{-1},
\end{equation}
results in 
\[
\gamma_0<\frac{1}{1+\epsilon}<1.
\]
\item If secondly,
\[
\frac{2\tau}{\norm{\NF_\F(x_0)}_X}< 1,
\]
then
\[
t_0=\sqrt{\frac{2\tau}{\norm{\NF_\F(x_0)}_X}}
\ge\sqrt{2\tau\inf_{n\ge 0}\|\NF_{\F}(x_n)\|_X^{-1}}.
\]
Noticing that
\[
\tau\le \frac{2}{K^2(1+\epsilon)^2}\inf_{n\ge 0}\|\NF_{\F}(x_n)\|_X^{-1},
\]
or equivalently,
\[
\inf_{n\ge 0}\|\NF_{\F}(x_n)\|_X^{-1}\ge\frac{\tau K^2(1+\epsilon)^2}{2},
\]
we arrive that 
\[
t_0\ge\tau K(1+\epsilon).
\]
In this way, we obtain
\[
\gamma_0=1-t_0+K\tau\le 1-\tau K(1+\epsilon)+K\tau\le 1-K\tau\epsilon.
\]
Recalling~\eqref{eq:2014}, we see that~$0<K\tau\epsilon<\epsilon(1+\epsilon)^{-1}<1$.
\end{enumerate}
In summary, we see that~$\gamma_0\le q$, where~$q=\max\left(1-K\tau\epsilon,(1+\epsilon)^{-1}\right)\in(0,1)$. It follows from~\eqref{eq:13} that~$\norm{\F(x_1)}_X\le q\norm{\F(x_0)}_X$. By induction, we conclude that
\[
\norm{\F(x_n)}_X\le q^n \norm{\F(x_0)}_X\to 0,
\]
with~$n\to\infty$. This completes the proof.
%
\end{proof}

\section{Applications}

The purpose of this section is to illustrate Algorithm~\ref{algorithm} by means of a number of examples. In particular, we will focus on nonlinear algebraic systems and on differential equations.

\subsection{Algebraic equations}
Let us look at two algebraic problems. The first one is a cubic polynomial equation on~$\mathbb{C}$ (identified with~$\mathbb{R}^2$) with three separate zeros, and the second example is a challenging benchmark problem in~$\mathbb{R}^2$.

\begin{example}\label{ex:alg1}
We consider the function
\begin{equation}
\label{Example1}
\F:\C\rightarrow \C, \qquad z\mapsto \F(z)=z^3-2z-4,
\end{equation}
with the three zeros 
\[
Z_{\F}=\{(2,0),(-1,1),(-1,-1)\}\subset\mathbb{C}. 
\]
We observe that~$\F'$ vanishes at~$\left(\pm\sqrt{\nicefrac{2}{3}},0\right)$. This causes large updates in the Newton iteration close to those points, and hence, a source of potential chaos has been generated by applying the NRT; cf.~\cite[Example~2]{ScWi11}. In order to discuss the behavior of the Newton method for this example, let us first focus on the vector fields corresponding to $ \F  $ and $ \NF_\F $; see Figure~\ref{bild3} left and right, respectively. One can clearly see that the root $ (2,0)\in Z_{\F} $ is repulsive for~$\F$. Moreover, the zeros $\{(-1,1),(-1,-1)\} \in Z_{\F} $ of $\F$ show a curl. For~$\NF_\F$ the situation is completely different: All the three roots are attractive, and the vectors point directly to the three roots of $\F$.  Therefore, the NRT $\NF_{\F} $ can be used to transport an initial guess $x_0 \in \mathcal{A}(x_{\infty})$ arbitrarily close to a root $x_{\infty}$. In the given example, we observe that the vector direction field is divided into three different sectors for~$\NF_\F$, which are the attractors for the initial value problem \eqref{continuous}.

\begin{figure}
\includegraphics[width=0.45\textwidth]{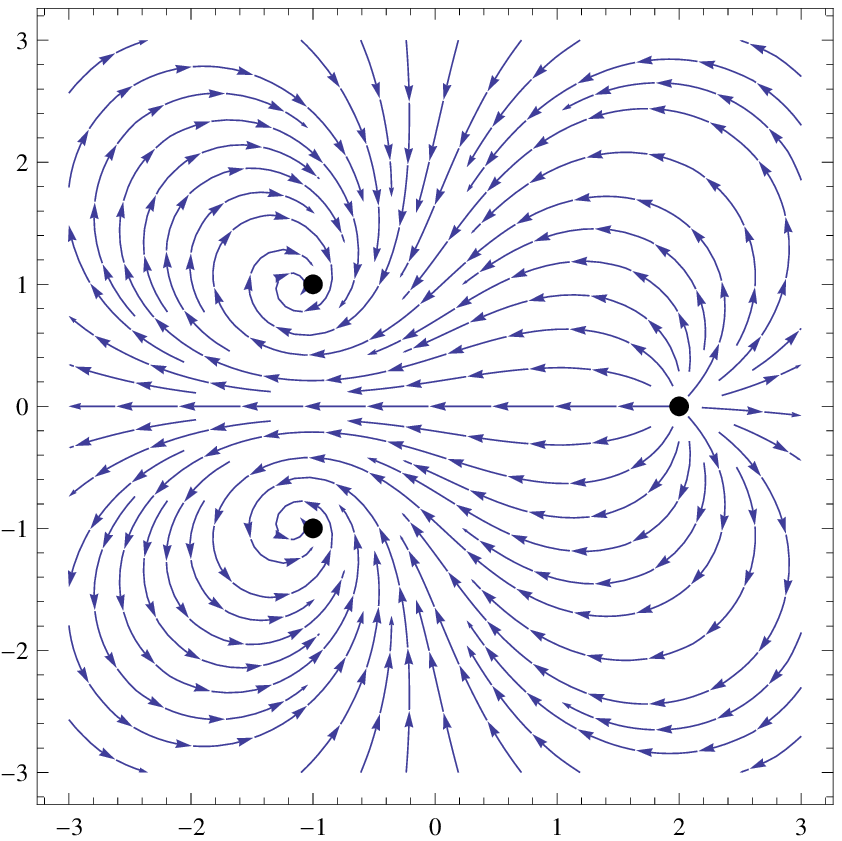}
\hfill
\includegraphics[width=0.45\textwidth]{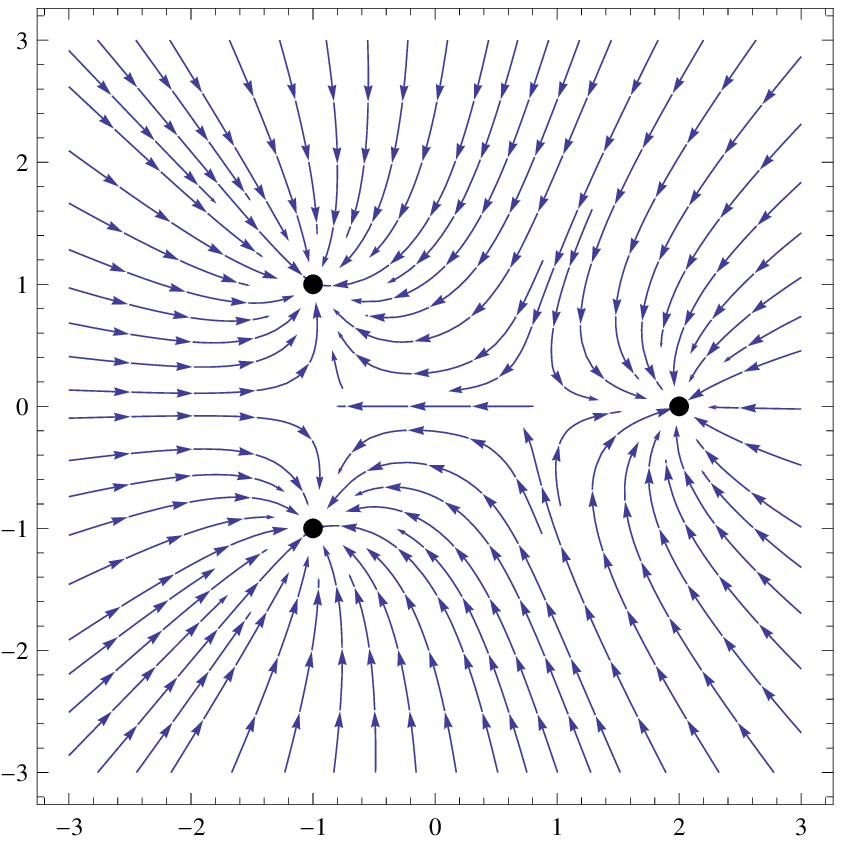}
\caption{The direction fields corresponding to $ \F(z)=z^3-2z-4 $ (left) and to the NRT (right).}
\label{bild3}
\end{figure}

In Figure~\ref{performance1} we display the behavior of the classical (with step size~$t_n=1$), the continuous, and the adaptive Newton method (with $\tau=0.05$ and~$t_n$ from~\eqref{neun}), for the initial point $ x_{0}=(0.08,0.55) $. We see that, while the classical solution shows large updates and thereby leaves the original attractor, the iterates corresponding to the adaptive Newton method follow the exact solution (which is approximated by a numerical reference solution with $ t\ll 1 $) quite closely and approach the "correct" zero. 

\begin{figure}[htp]
\begin{center}
\includegraphics[width=0.75\textwidth]{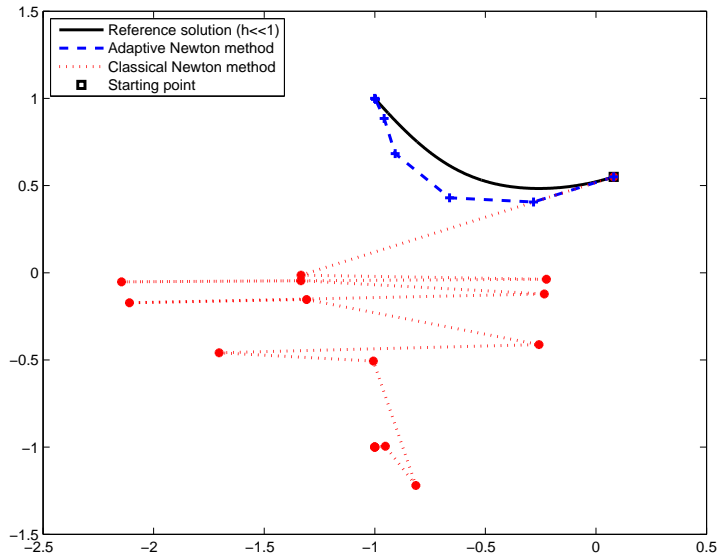}
\end{center}
\caption{Performance of the classical Newton and the Newton method with adaptive step size control (with $\tau = 0.05 $) for the starting point $ x_0=(0.08,0.55)$.}
\label{performance1}
\end{figure}

In order to visualize the domains of attraction of different Newton schemes, we compute the zeros of $\F$ by sampling initial values on a $1001\times 1001 $ grid in the domain $ [-5,5]\times [-5,5] $. In Figure~\ref{bild10}, we show the fractal generated by the traditional Newton method with constant step size~1 (left) as well as the corresponding plot for the damped Newton scheme with constant step size~$0.72$. We observe that the damped Newton method is able to control the chaos to some extent, however, there are still relatively large fractal areas. Furthermore, in Figure~\ref{bild8}, we use adaptive step size control based on Algorithm~\ref{algorithm} by setting $ \tau=0.1 $ (left) and~$\tau=0.001$ (right). The chaotic behavior caused by the singularities of~$\F'$ is clearly tamed by the adaptive Newton method. 

\begin{figure}
\includegraphics[width=0.45\textwidth]{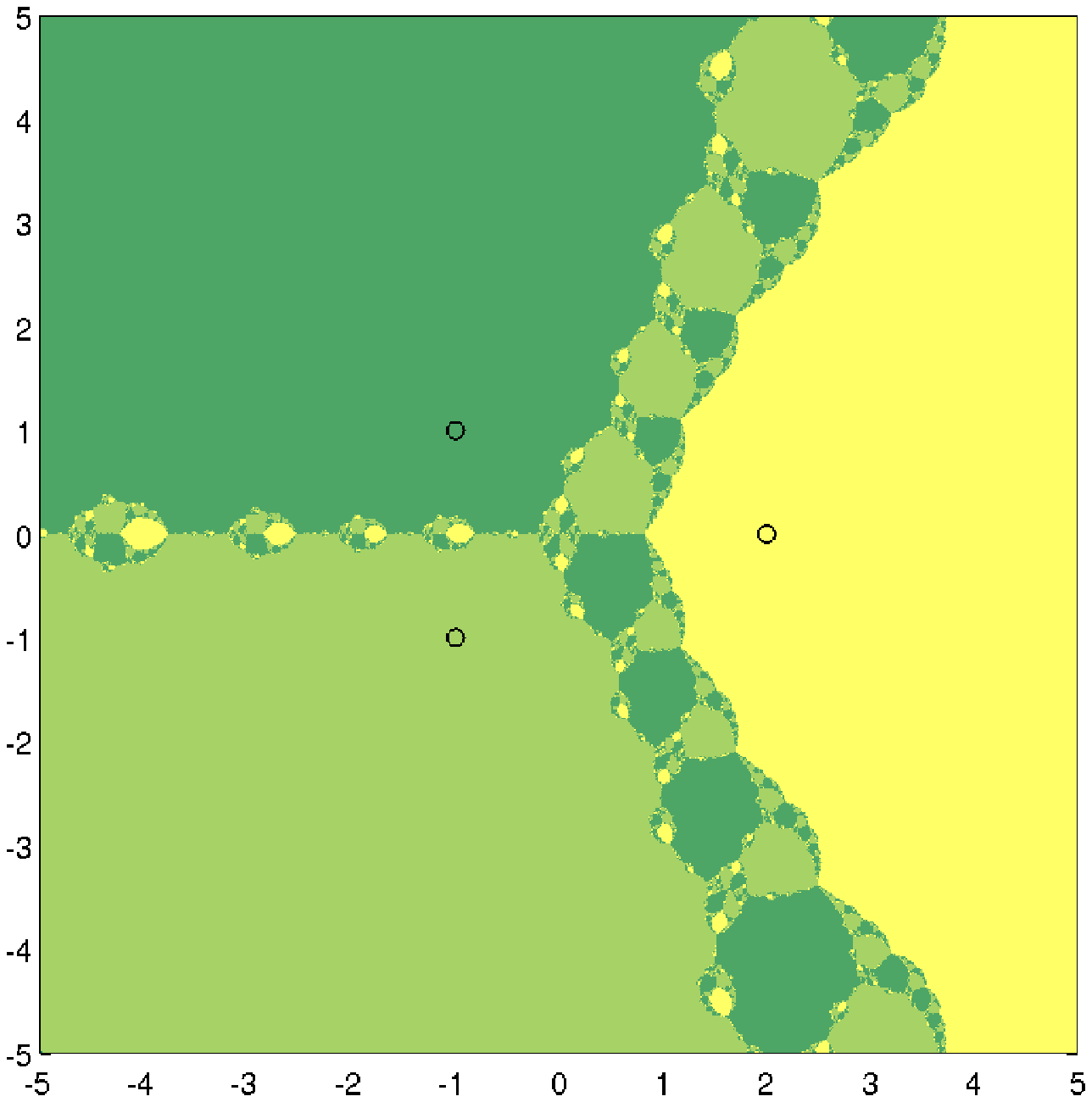}
\hfill
\includegraphics[width=0.45\textwidth]{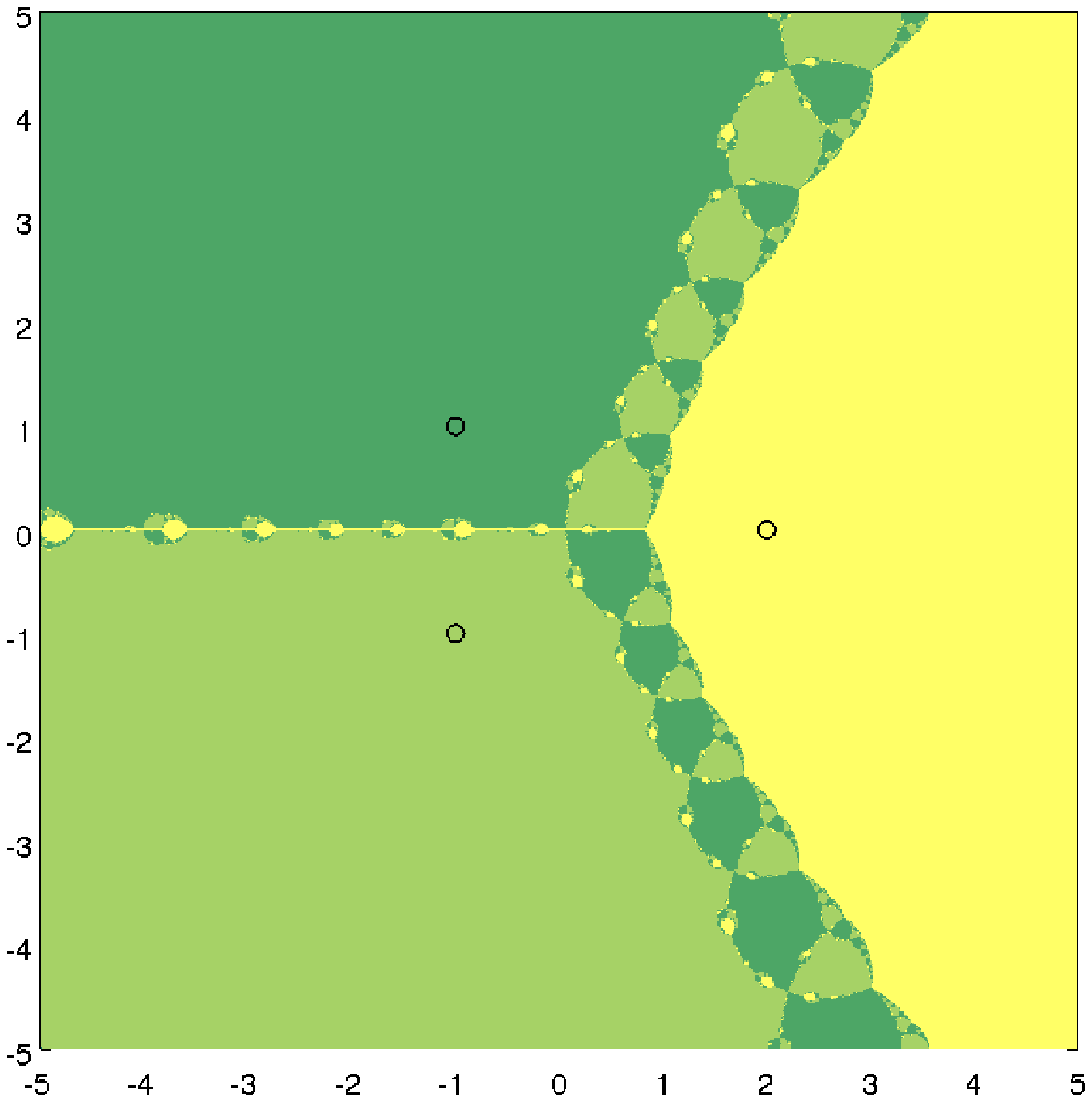}
\caption{The basins of attraction for $ z^3-2z-4=0 $ by the Newton method: The classical scheme on the left (i.e., $t=1$), and on the right with a fixed reduced step size ($t=0.72$). Three different colors distinguish the three basins of attraction associated with the three solutions (each of them is marked by a small circle).}
\label{bild10}
\end{figure}

\begin{figure}[htp]
\includegraphics[width=0.45\textwidth]{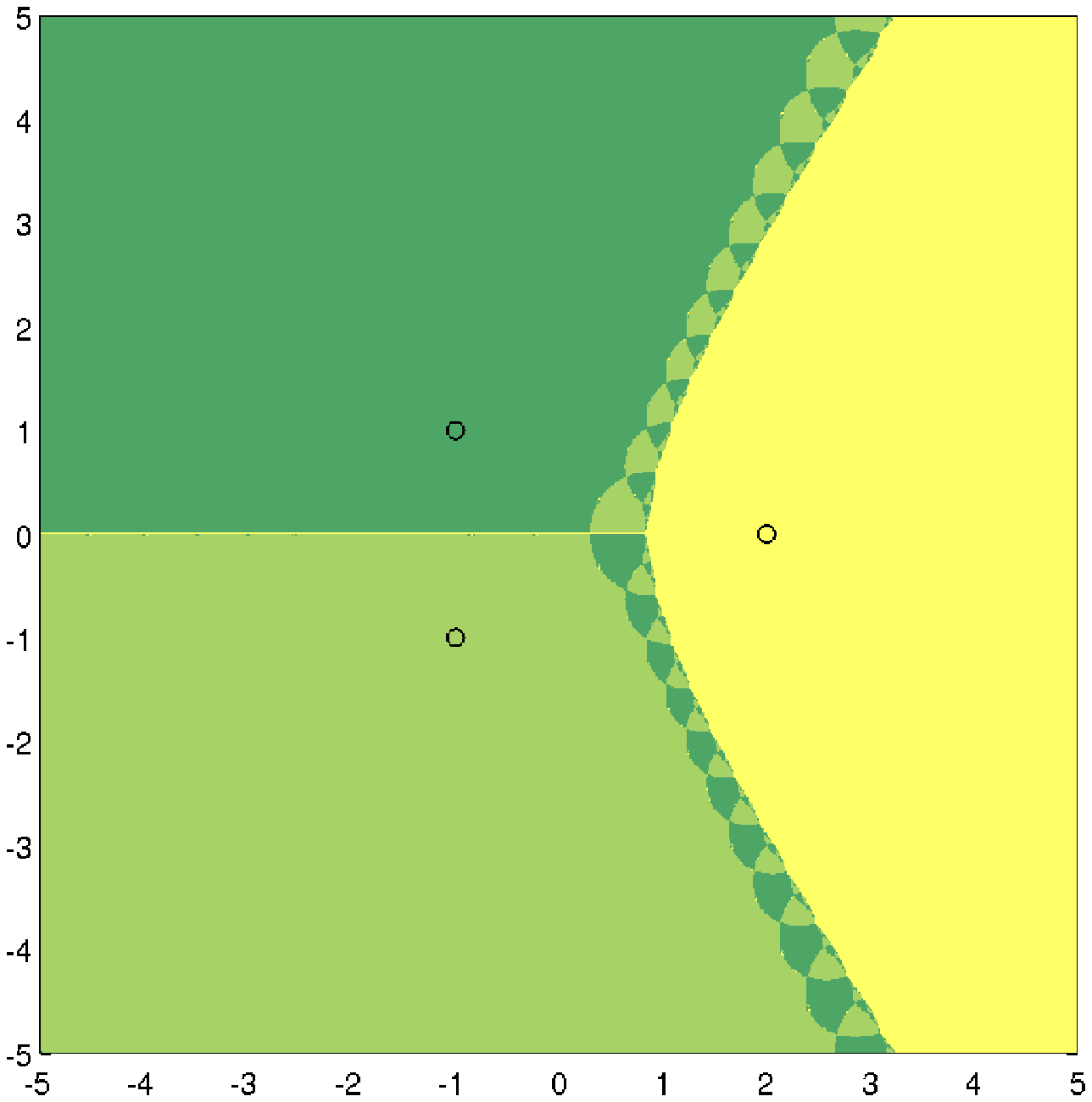}
\hfill
\includegraphics[width=0.45\textwidth]{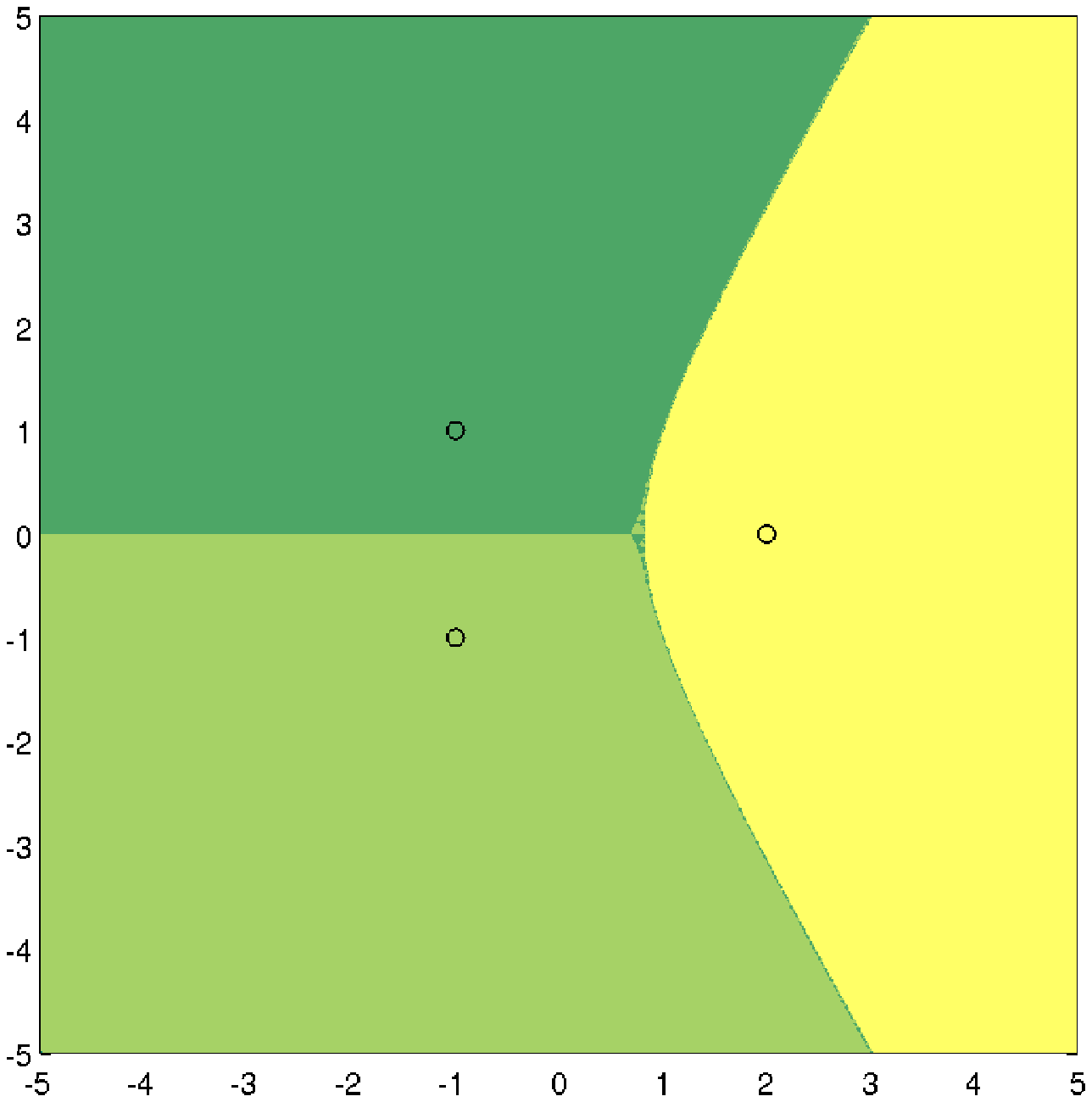}
\caption{Attractors for $ z^3-2z-4=0 $ by the Newton method. On the left with step size control for ($\tau=0.1$) and on the right for $(\tau=0.001)$.}
\label{bild8}
\end{figure}

Comparing the statistics resulting from a step size control computation with $ \tau =0.1 $ with the corresponding results for a fixed step size underlines the superiority of the proposed approach; see the performance data in Table~\ref{performancetable1}. 
The information is based on $ 10^4 $ starting values in the domain $ [-5,5]\times [-5,5] $. We list the percentage of convergent iterations, the average number of iterations necessary to obtain an absolute accuracy of at least $10^{-8} $, and the average convergence rate defined as follows: The error in the $n$-th iteration, that is
\[
e_{n}=\norm{x_{\infty}-x_n}, \qquad x_{\infty} \in Z_{\F},
\]  
is supposed to satisfy a relation of the form 
\begin{equation}
\label{least}
e_n=ce^{\rho}_{n-1}\qquad \Leftrightarrow \qquad \ln(e_n)=C+\rho \ln(e_{n-1}),\quad n\in  \N,
\end{equation}
for a constant~$\rho$. This is the rate of convergence, which, for $ n \to \infty $, will typically tend to a stable limit. Clearly, due to finite resources, we can determine $ \rho $ only empirically, i.e., we denote by $ \tilde{\rho} $ the convergence rate that we will obtain by applying a least squares approximation to \eqref{least} (averaged over all computed iterations) for the unknown parameters $ \rho $ resp.~$ C $. A starting value $ x_0 $ is called convergent if it is in fact convergent and, additionally, approaches the "correct zero", i.e. the zero that is located in the same exact attractor as the initial value $ x_0 $. To decide whether or not the starting value $x_0$ approaches the correct zero, we simultaneously compute a reference solution $x_{\text{ref}}$ using a fixed step size $ t\ll1 $. Our results demonstrate, in contrast to the Newton method with fixed step size, that the rate of convergence in the adaptive approach is nearly quadratic, and that the number of convergent iterations is close to $100$\%.

\begin{table}[htp]
\caption{Performance data for Example~\ref{ex:alg1} on $ [-5,5]\times [-5,5] $.}
\begin{center}
\begin{tabular}{@ {}*{4}{ l}@ {}}\toprule
 & Step size $h\equiv 1$ & Step size $ \equiv 0.72 $ & Adapt. $ \tau=0.1 $\\
 \midrule
 Average nr. of iterations & $21.4$ & $27$ & $14$  \\
 Average step size & $1$ & $0.72$ & $0.72$  \\
 \% of convergent iterations  & $87.7\%$ & $92\%$ & $96.5\%$ \\
 Average rate $\tilde{\rho}$ & $1.72$ & $0.945$ & $1.89$ \\
 \bottomrule
\end{tabular}
\end{center}
\label{performancetable1}
\end{table}
\end{example}


\begin{figure}[htp]
\includegraphics[width=0.45\textwidth]{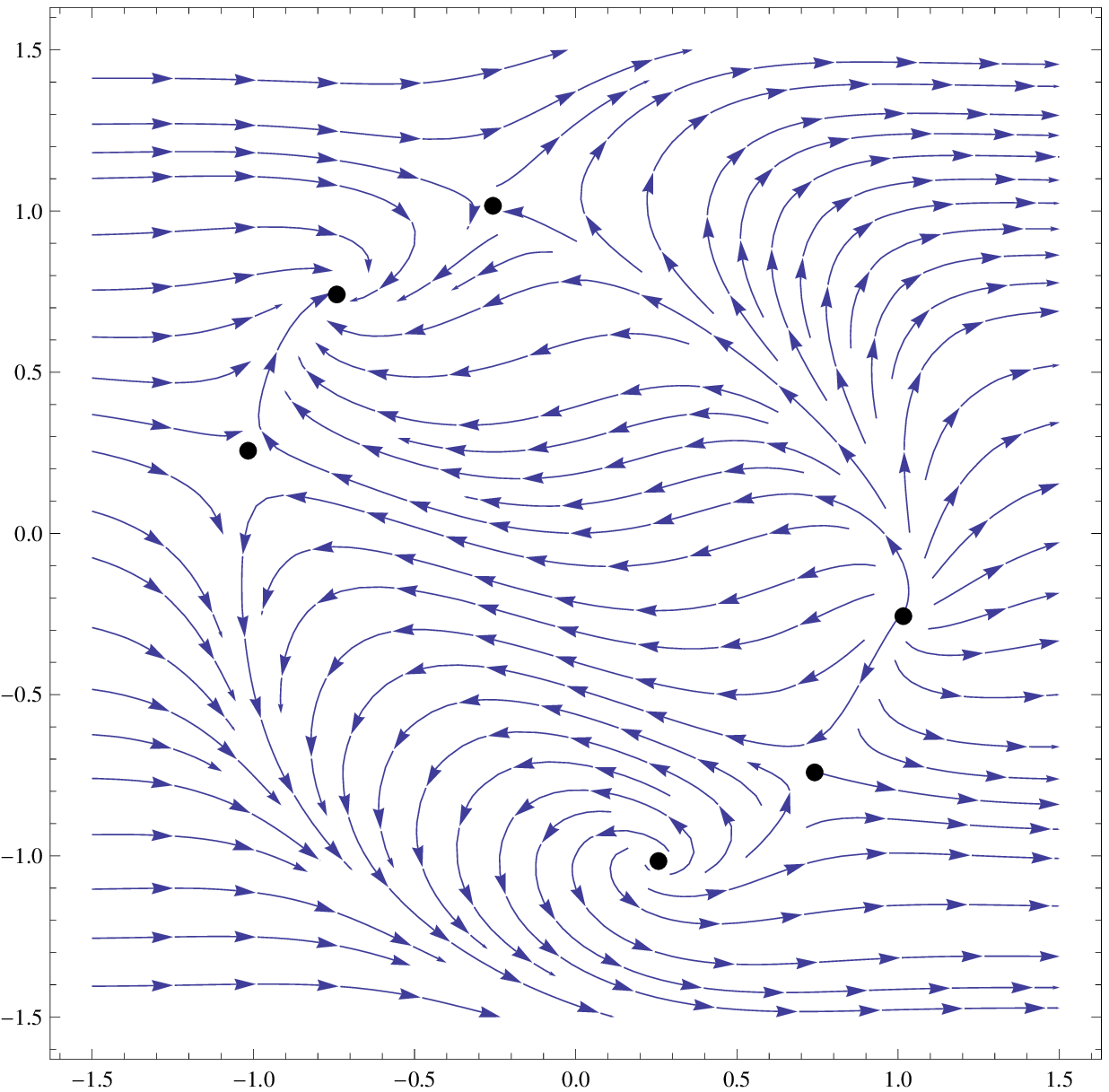}
\hfill
\includegraphics[width=0.45\textwidth]{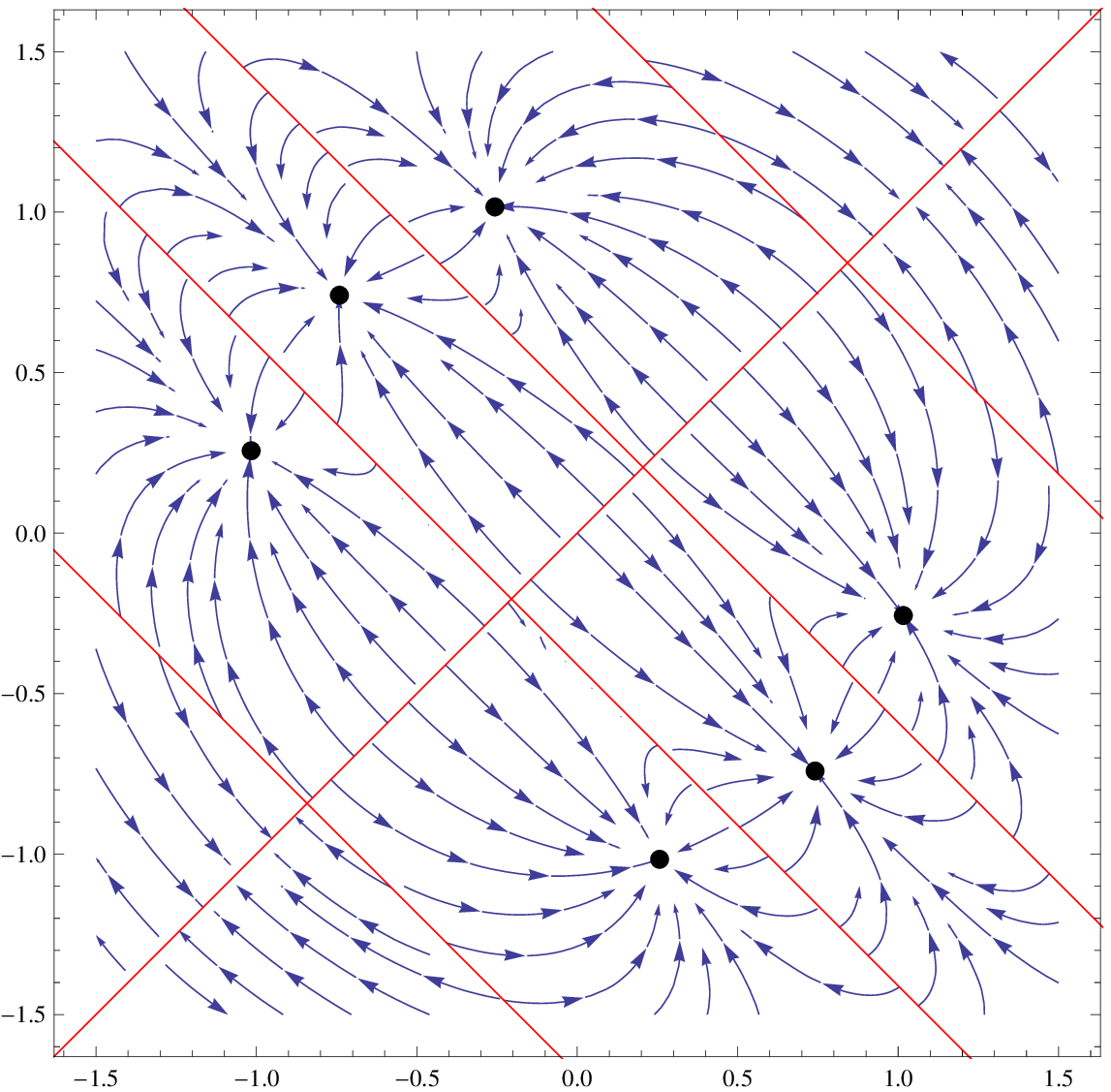}
\caption{The direction field for $\F$ resp.~of the NRT over the domain $ \Omega = [-1.5,1.5]^2$.}
\label{flowdeufelhard}
\end{figure}

\begin{example}\label{ex:alg2} 
The second example is a benchmark $2\times2$ algebraic system from~\cite{5}. Consider the function
\begin{equation}
\label{expsin}
\F:\Omega\subset \R^2 \rightarrow \R^{2}, \qquad \F(x,y)=\begin{pmatrix}\exp(x^2+y^2)-3\\x+y-\sin(3(x+y))\end{pmatrix},
\end{equation}
with $ \Omega=[-1.5,1.5]^{2} $.
First of all we notice that the set where the Jacobian of~$\F$ becomes singular is given by the straight lines
\begin{equation}
\label{singular}
\{y=x\},   \quad \text{and} \quad \left\{y=-x\pm \frac{1}{3}\arccos\left(\frac{1}{3}\right)\pm\frac{2}{3}\pi k, \ k\in \N_{\geq0}\right\}.
\end{equation}
The set $\Omega$ contains exactly six different roots of $\F$, which all become  locally attractive when applying the NRT; see Figure~\ref{flowdeufelhard} (right). However, for these six roots, we have six different basins of attraction, which are separated by the straight lines given in \eqref{singular}. In Figure~\ref{flowdeufelhard} (right) the red lines indicate the critical interfaces where the Jacobian becomes singular.  

Before we apply the Newton method to this example let us point out an important fact: The continuous Newton ODE is obviously not able to lead an initial guess $x_0$ to a root of $\F$ when we start in a separated subdomain where no root is located. The present example nicely underlines this effect when we focus on the top right or the bottom left part of the domain $\Omega $ (see Figure~\ref{flowdeufelhard} (right)). In particular, when starting with an initial guess located in a domain where we have no root for $\F$, the corresponding Newton path ends at a critical point. This is potentially different when we apply the discretized version. In fact, starting in a subdomain without a root does not necessarily imply that the Newton method will be unable to find a root of $\F$ since the {\em discrete} sequence may indeed cross critical interfaces. If we choose $\tau\ll1 $, however, the Newton sequence is close to its corresponding continuous Newton path. This indicates that retaining a certain amount of chaos (i.e., choosing~$\tau$ not too small) in the discrete Newton iteration might even increase the domain of convergence. This is particularly important when no a priori information on the location of the zeros is available. In Figure~\ref{fig:expsinbasins} we display the domains of attraction.  Note that the dark blue shaded part indicates the domain where the iterations fail to converge. We clearly see that step size control is able, on the one hand, to tame the chaotic behavior of the iteration and, on the other hand, to enlarge the domain of convergence. Table~\ref{performancetableexpsin} presents the performance data for the classical and the adaptive Newton method by sampling $10^4$ initial values  on the domain $\Omega=[0,1.5]\times [-1.5,0] $. Again, the favorable convergence features of the adaptive approach become evident.

\begin{figure}[htp]
\includegraphics[width=0.45\textwidth]{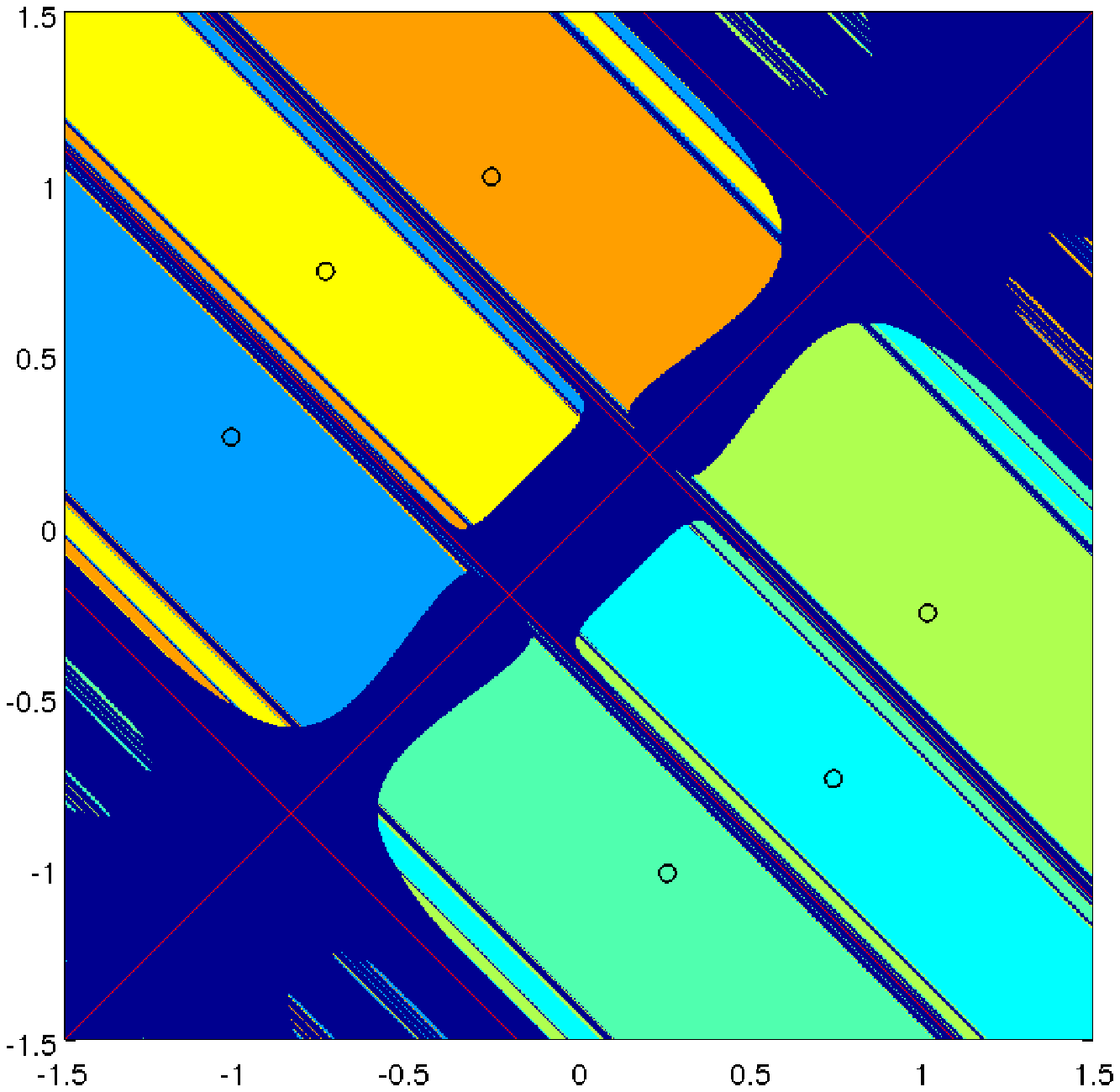}
\hfill
\includegraphics[width=0.45\textwidth]{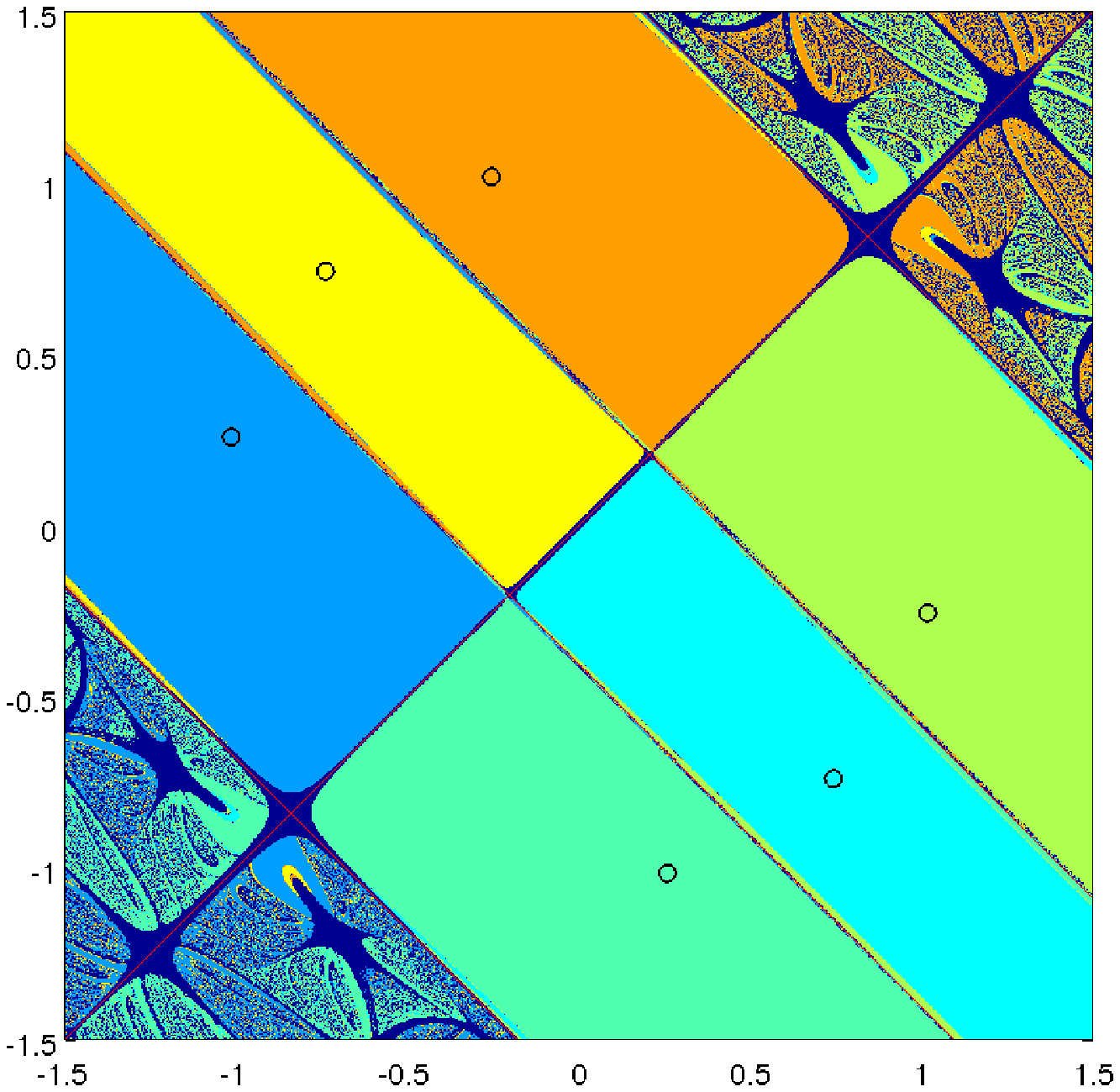}
\caption{Classical Newton method (left) and adaptive Newton method with $\tau = 0.1 $ (right).}
\label{fig:expsinbasins}
\end{figure}

\begin{table}[htp]
\caption{Performance data for Example~\ref{ex:alg2} on $ [0,1.5]\times [-1.5,0] $.}
\begin{center}
\begin{tabular}{@ {}*{4}{ l}@ {}}\toprule
 & Step size $t\equiv 1$ & Step size $t\equiv 0.917 $ &  Adapt. $ \tau=0.1 $\\
 \midrule
 Average nr. of iterations & $16.7$ & $13.3$ & $6$  \\
 Average step size & $1$ & $0.917$ & $0.917$  \\
 \% of convergent iterations  & $81\%$ & $86\%$ & $97\%$ \\
 Average rate $\tilde{\rho}$ & $1.57$ & $1.11$ & $1.9$ \\
 \bottomrule
\end{tabular}
\end{center}
\label{performancetableexpsin}
\end{table}
\end{example}

\subsection{ODE Boundary Value Examples}
We shall now turn to ordinary boundary value problems.

\begin{example}\label{ex:ode1}
As a first example we discuss the nonlinear two-point boundary value problem given by

\begin{equation}
		\label{ex1}
	\left\{ \begin{aligned}
& u''+u^3=0, \ \text{on} \ (0,1), \\
& u(1)=u(0)=0.
\end{aligned} \right.
\end{equation}
Let us collect a few facts about \eqref{ex1}. Note that if $ u $ is a solution, then $ -u $ is as well a solution. Moreover, by a phase-plane analysis one can see that \eqref{ex1} has a unique positive solution $ u_+>0 $ (see~Figure~\ref{init} (right)). Thus, we have (at least) the three solutions $ \{u_0,u_+,u_-\} $ with $ u_-=-u_+ $, and $ u_0\equiv 0 $. Note that these solutions are roots of the nonlinear operator $ \F(u)=u''+u^3 $. Since, except for the trivial solution, we have no analytical solution formulas at hand, we will compare the numerical solutions and the corresponding exact solutions by means of their integral value over the domain $(0,1)$. Indeed, one can show (see, e.g., \cite{6}) that the unique positive solution $ u_{+} $ of \eqref{ex1} satisfies
\[
\int_{0}^{1}{u_{+}(x)\dx}=\frac{\pi}{\sqrt{2}}.
\]  
Consequently, we will identify the three solutions above with their corresponding integral values $ I_{S}=\left\{0,\nicefrac{\pi}{\sqrt{2}},-\nicefrac{\pi}{\sqrt{2}}\right\} $.

In our computations we determine numerical solutions of \eqref{ex1} by use of a standard finite element discretization based on piecewise linear basis functions (on uniform meshes with mesh size~$h=\nicefrac{1}{n}$, for some~$n\in\N$), and combine it with the Newton scheme~\eqref{damped}. Having computed such an approximate solution, we compare its integral value with the three values $I_{S}$ in order to decide to which solution our initial guess has converged. We will discuss this procedure in more detail in the sequel.

As initial guesses for the Newton iteration we use the following discrete set of piecewise linear continuous functions given by 
\begin{equation}
\label{initial}
u_{(i,j,0)}(0)=u_{(i,j,0)}(1)=0,\qquad
u_{(i,j,0)}(ih)=\alpha_j,
\end{equation} 
\begin{figure}
\includegraphics[width=0.45\textwidth]{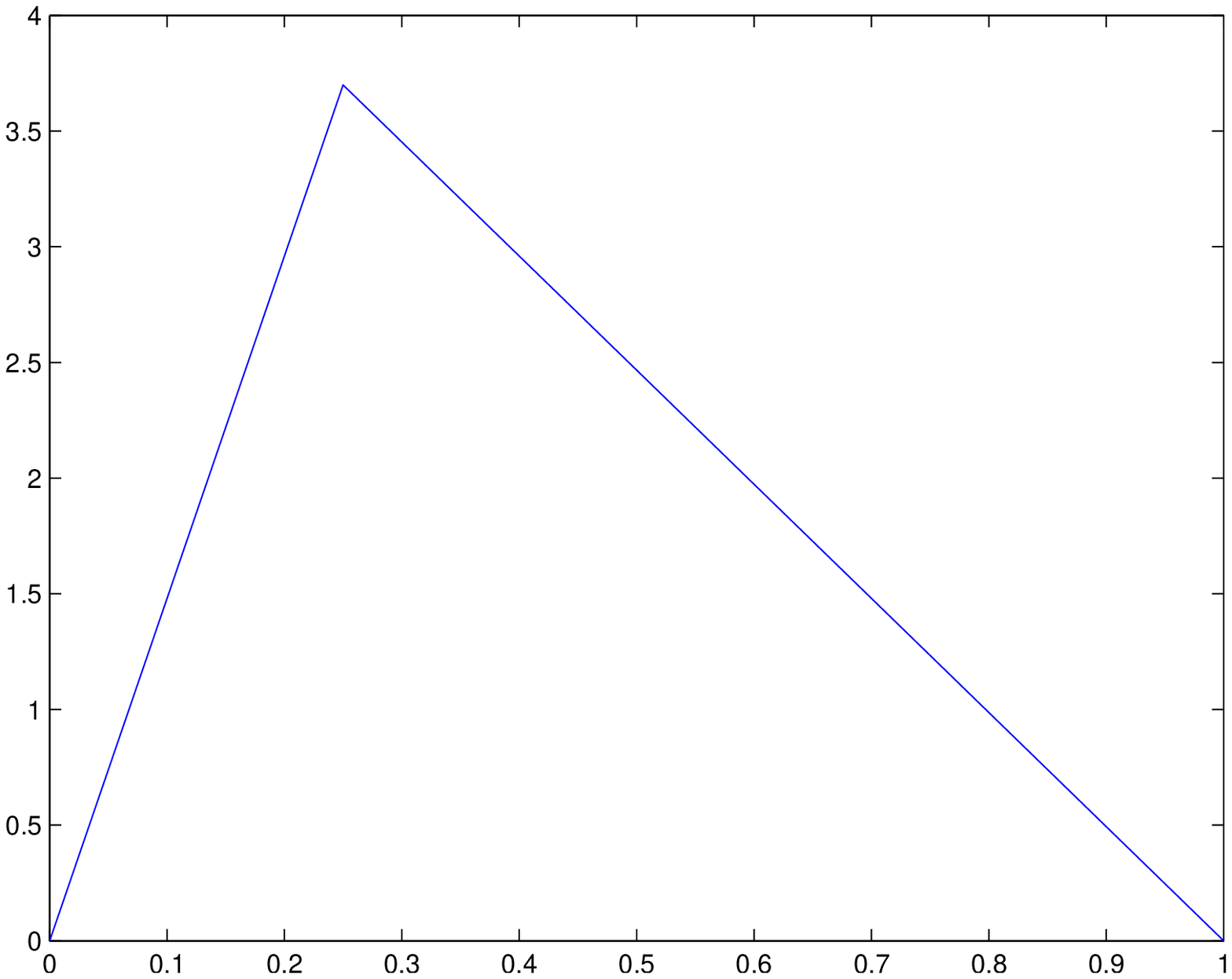}
\hfill
\includegraphics[width=0.45\textwidth]{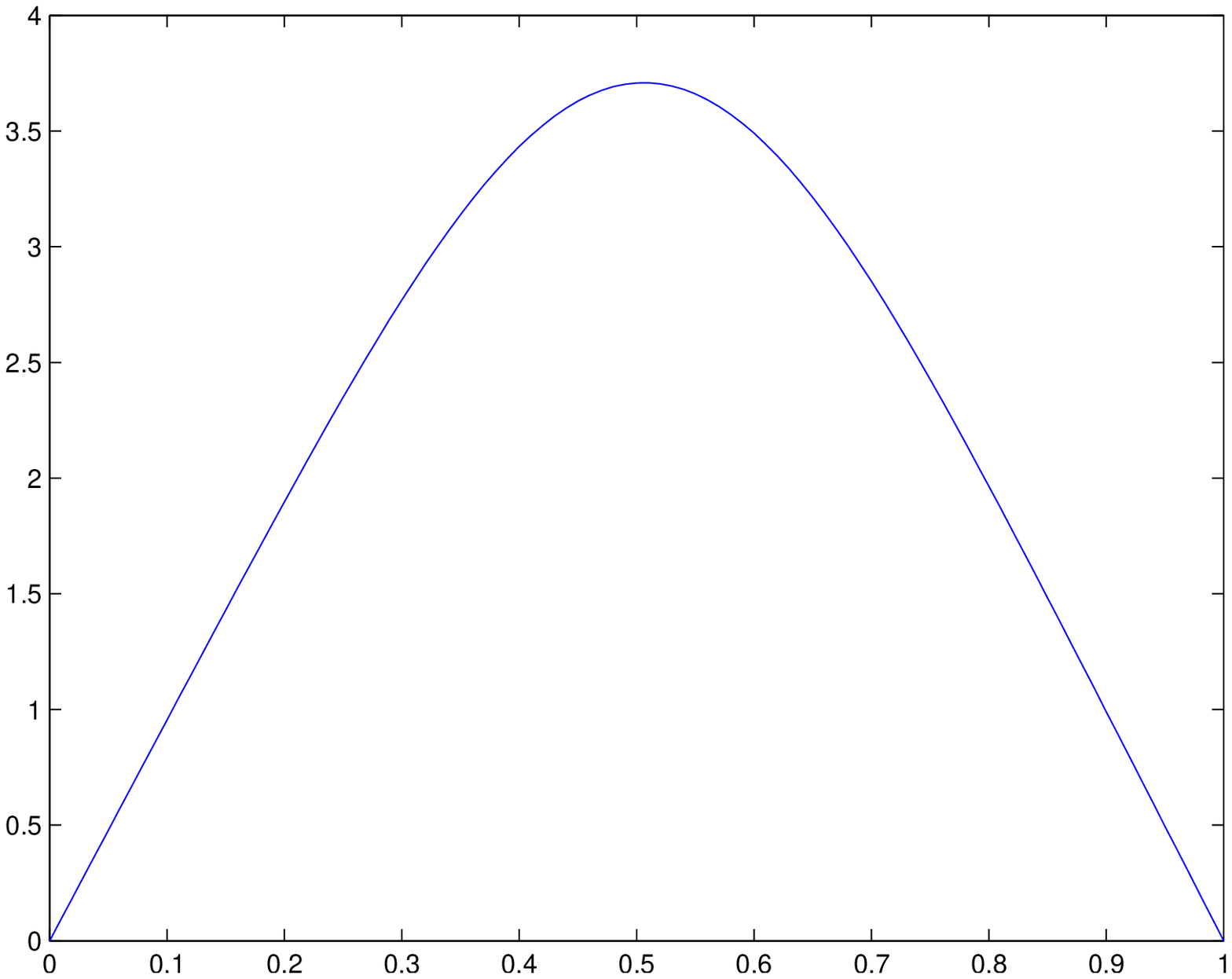}
\caption{Example of initial guess $u_{(i,j,0)}$ (left) and unique positive solution $ u_{+}$ of \eqref{ex1} (right).}
\label{init}
\end{figure}
for $ i \in \{1,2,\ldots,n-1\}$, and $\alpha_j \in [-4,4] $ with some range of indices for~$j$; cf.~Figure~\ref{init} (left) for an example. We can now visualize some finite dimensional subsets of the basins of attraction of the three solutions $\{u_0,u_+,u_-\}$ based on these initial guesses. More precisely, we identify an initial guess~$u_{i,j,0}$ given in \eqref{initial} by a point $ (ih,\alpha_j)$, where, for the computations, these points are taken from a uniform $ 400 \times 400 $ grid in the two-dimensional rectangle $(0,1)\times [-4,4]$. For each initial guess~$u_{i,j,0}$ we compute a sequence of solutions generated by the Newton method~\eqref{damped}, and determine the solution it converges to by checking the corresponding integral value from~$I_S$. The associated starting point $ (ih,\alpha_j)$ is then colored accordingly. This results in a two-dimensional plot showing a subset of the possibly infinite dimensional attractors of the three solutions.

It is reasonable to expect that the extremum value $ \alpha_j $ of the initial guess $ u_{(i,j,0)} $ will play an important role in the convergence behavior of the Newton scheme:
\begin{enumerate}
\item
For positive values $ \alpha_{j} $ close to the maximum of $ u_{+} $, we expect that the corresponding initial guess $ u_{(i,j,0)} $ converges to $ u_{+} $.    
\item
For negative values $ \alpha_{j} $ close to the minimum of $ u_{-} $, we expect that the corresponding initial guess $ u_{(i,j,0)} $ converges to $ u_{-} $.  
\item
For values $ \alpha_{j} $ close to $0 $, we expect that the corresponding initial guess $ u_{(i,j,0)} $ converges to the trivial solution $u_{0} $.  
\end{enumerate}

In Figure~\ref{odechaos} we present the three basins of attraction associated with the three solutions $\{u_+,u_-,u_0\}$  for both the traditional Newton-Galerkin scheme (with step size~1) and for the adaptive Newton-Galerkin method (Algorithm~\ref{algorithm}). 
\begin{figure}
\includegraphics[width=0.45\textwidth]{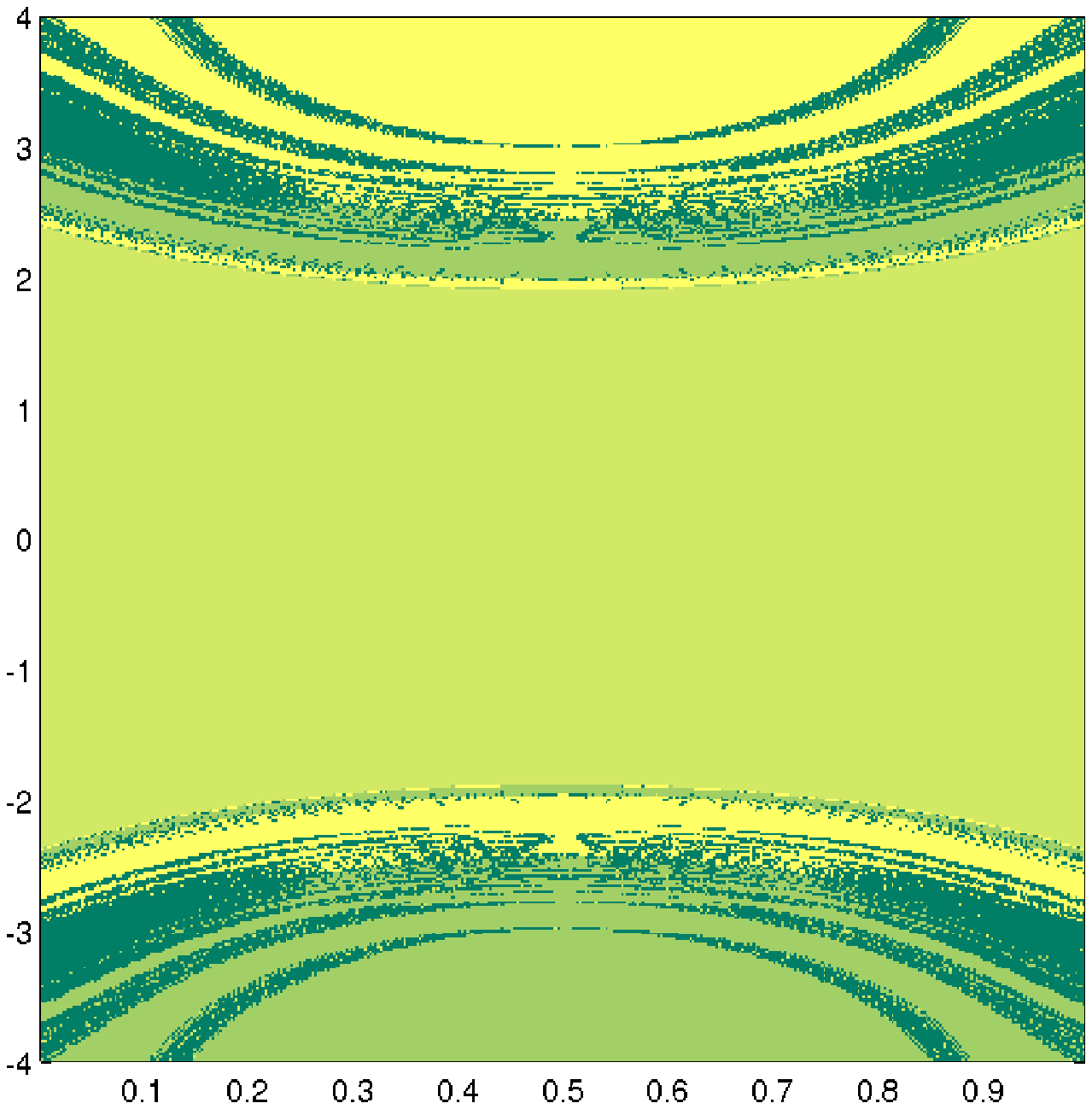}
\hfill
\includegraphics[width=0.45\textwidth]{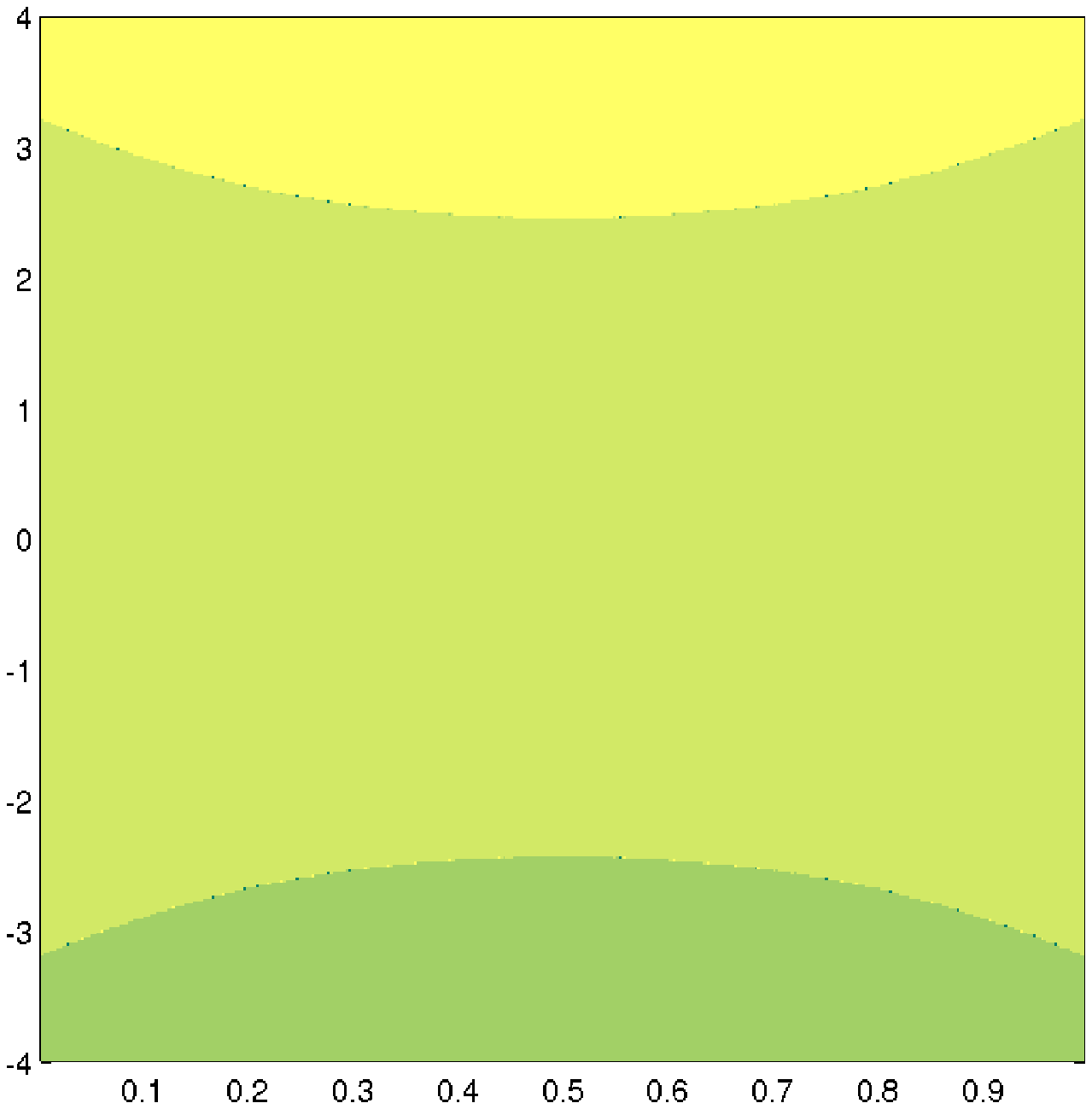}
\caption{The Newton-Galerkin method without (left) and with (right) step size control ($ \tau=0.1$).}
\label{odechaos}
\end{figure}
For the standard Newton-Galerkin method, we observe that there is a considerable number of  initial guesses which do not converge to the closest root (close in the sense of the average value of the exact solution). As in the algebraic example, moving an initial guess $ u_{(i,j,0)} $  to a sufficiently small neighborhood of a solution of \eqref{ex1} might not always be a well-conditioned procedure. Again, there are initial guesses which approach the area of quadratic convergence at a low rate or they visit various attractors before they approach a solution. The dark colored parts in Figure~\ref{odechaos} display the initial guesses $ u_{(i,j,0)} $ for which the iteration does not converge to one of the solutions~$\{u_0,u_+,u_-\}$ after a prescribed, maximal number of iterations. By applying step size control in the Newton iteration, we hope for more initial guesses $ u_{i,j,0} $ to converge, and moreover, for the chaotic behavior to be tamed considerably. This is indeed the case as becomes clear from Figure~\ref{odechaos} (right), where we clearly see that step size control in the case of solving ODEs by the Newton-Galerkin method is able to reproduce the boundaries between the attractors. 

\begin{figure}[htp]
\begin{center}
\includegraphics[width=0.75\textwidth]{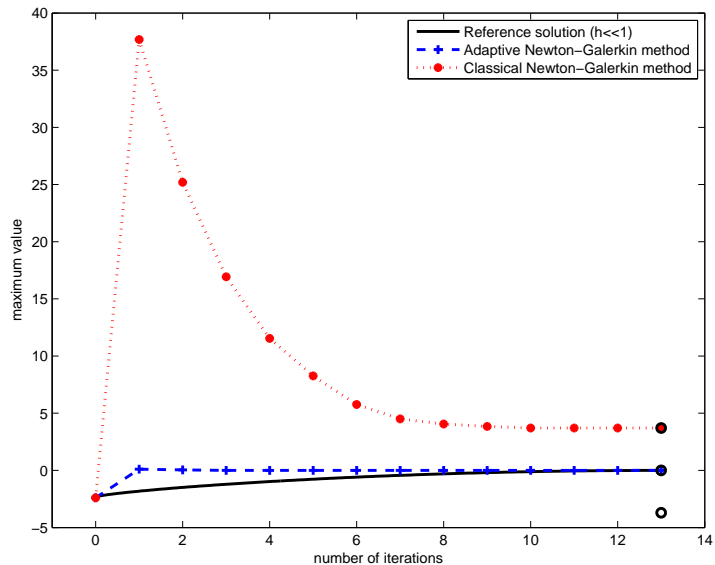}
\end{center}
\caption{Performance of the classical Newton-Galerkin and of the Newton-Galerkin method with adaptive step size control (with $\tau = 0.5 $) for the initial guess associated with the point~$ (0.5,-2.405) $. The vertical axis represents the extremal value of the corresponding iterate. The three small circles indicate the extremal values of the three solutions of \eqref{ex1}.}
\label{performanceplot3}
\end{figure}

In Figure~\ref{performanceplot3} we display the behavior of the classical Newton-Galerkin and of the adaptive Newton-Galerkin method with $ \tau=0.5 $, for the initial guess $ u_{i,j,0} $ with $n=100$, $ih=0.5$ and $ \alpha_{j}=-2.405 $, i.e., corresponding to the point~$(0.5,-2.405)$ which belongs to the attractor of~$u_0$. While the adaptive Newton method follows the exact trajectory closely and hence reaches the "correct" solution~$u_0$ of~\eqref{ex1}, we see that the classical Newton-Galerkin methods approaches the positive solution $ u_{+}$ instead. This is due a detour taken by the standard Newton method which is caused by an oversized update at the initial step. Also, notice that the adaptive scheme, as compared to the classical method, converges much faster to the associated zero.

In Table~\ref{perf22} we observe the benefits of step size control based on $10^4$ initial values of type \eqref{initial} with $ 	\alpha_{j} \in [-4,4] $. Again, an initial value $ u_{(i,j,0)} $ is considered convergent if it approaches the "correct solution" of \eqref{ex1}, i.e. the solution that is located in the same "exact" attractor as the initial value. The average numbers of iterations listed in Table~\ref{perf22} are determined such that, firstly, we obtain an absolute accuracy of at least $10^{-8}$ between the $n$-th and $(n+1)$-th iterates, and, secondly, the absolute error between the reference solution (which we computed with a small step size $t\ll1$) and the $(n+1)$-th iterate is at least $10^{-3}$. As before we compute an empirically determined convergence rate $ \tilde{\rho} $, where, incidentally, we only take into account those iterations which are convergent to the correct zero. The error in the $n$-th iteration is defined by
\[
e_{n}=\min{\abs{I_{S}-I_{N}(u_{i,j,n})}}.
\]
where $ I_{N}$ is the integral value of the numerical solution~$u_{i,j,n}$ resulting from~$n$ Newton steps for the initial value~$u_{i,j,0}$. We clearly observe a noticeable improvement in the average convergence rate $\tilde{\rho} $. Moreover almost all initial guesses converge, and the number of iterations is reduced by approximately $33\% $ compared to the traditional method.

\begin{table}[htp]
\caption{Performance data for Example~\ref{ex:ode1} for $ 10^4 $ initial guesses of type \eqref{initial}.}

\begin{tabular}{@ {}*{5}{ l}@ {}}\toprule
 & Step size $h\equiv 1$  &  Adaptive $ \tau=0.1 $   \\
 \midrule
 Average nr. of iterations & $23.5$ & $16$ \\
 Average step size & $1$ & $0.57$  \\
 \% of convergent iterations  & $74.5\%$  & $97\%$ \\
 Average rate $\tilde{\rho}$ & $1.4$ & $1.53$ \\
 \bottomrule
\end{tabular}
\label{perf22}
\end{table}
\end{example}

\begin{example}\label{ex:ode2}
As a second example, we consider the equation
\begin{equation}
		\label{bratu}
	\left\{ \begin{aligned}
& u'' + e^{u+1}=0 \ \text{on} \ (0,1), \\
& u(1)=u(0)=0,
\end{aligned} \right.
\end{equation}
which is also known as the $1$-D Bratu problem. We have the analytical solutions 
\[
u(x)=-2\ln\left(\frac{\cosh\left((x-\nicefrac{1}{2})\nicefrac{\theta}{2}\right)}{\cosh\left(\nicefrac{\theta}{4}\right)}\right),
\]
where $ \theta $ is determined by the transcendental equation
\begin{equation}
\label{theta} 
\theta = \sqrt{2 e}\cosh\left(\nicefrac{\theta}{4}\right).
\end{equation}
Note that there are exactly two solutions $\theta$ for \eqref{theta}, and hence, we have two solutions~$u_1$ and~$u_2$ of~\eqref{bratu} (see Figure~\ref{soli}).
\begin{figure}[htp]
\begin{center}
\includegraphics[width=0.5\textwidth]{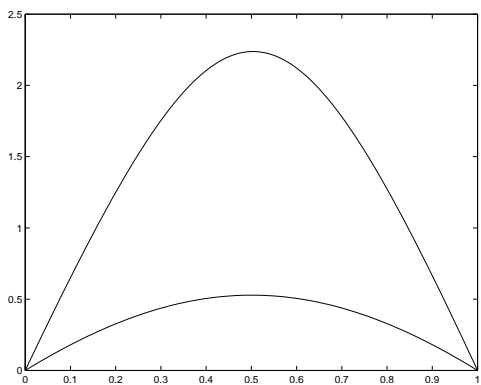}
\end{center}
\caption{The two exact solutions $u_1,u_2 $ of Bratu's equation \eqref{bratu}.}
\label{soli}
\end{figure}

As initial guesses for the Newton-Galerkin computations we again take the functions defined in \eqref{initial}, and compare the standard method with the one with step size control. In Figure~\ref{bratuclassic} we present the attractors for the traditional and the adaptive Newton-Galerkin methods by sampling $400\times 400 $ initial guesses corresponding to the points~$(ih,\alpha_j)$ in the rectangular domain  $ (0,1)\times [0,6] $. The yellow and green parts mark the attractors for the solution $ u_1 $ and $ u_2 $, respectively. We observe that, for the Newton iteration without step size control, there is a dark green shaded part separating the two domains of attraction. However, applying Algorithm~\ref{algorithm}, we observe that the boundaries of the different domains of attraction are nicely smoothed out. 

\begin{figure}
\includegraphics[width=0.45\textwidth]{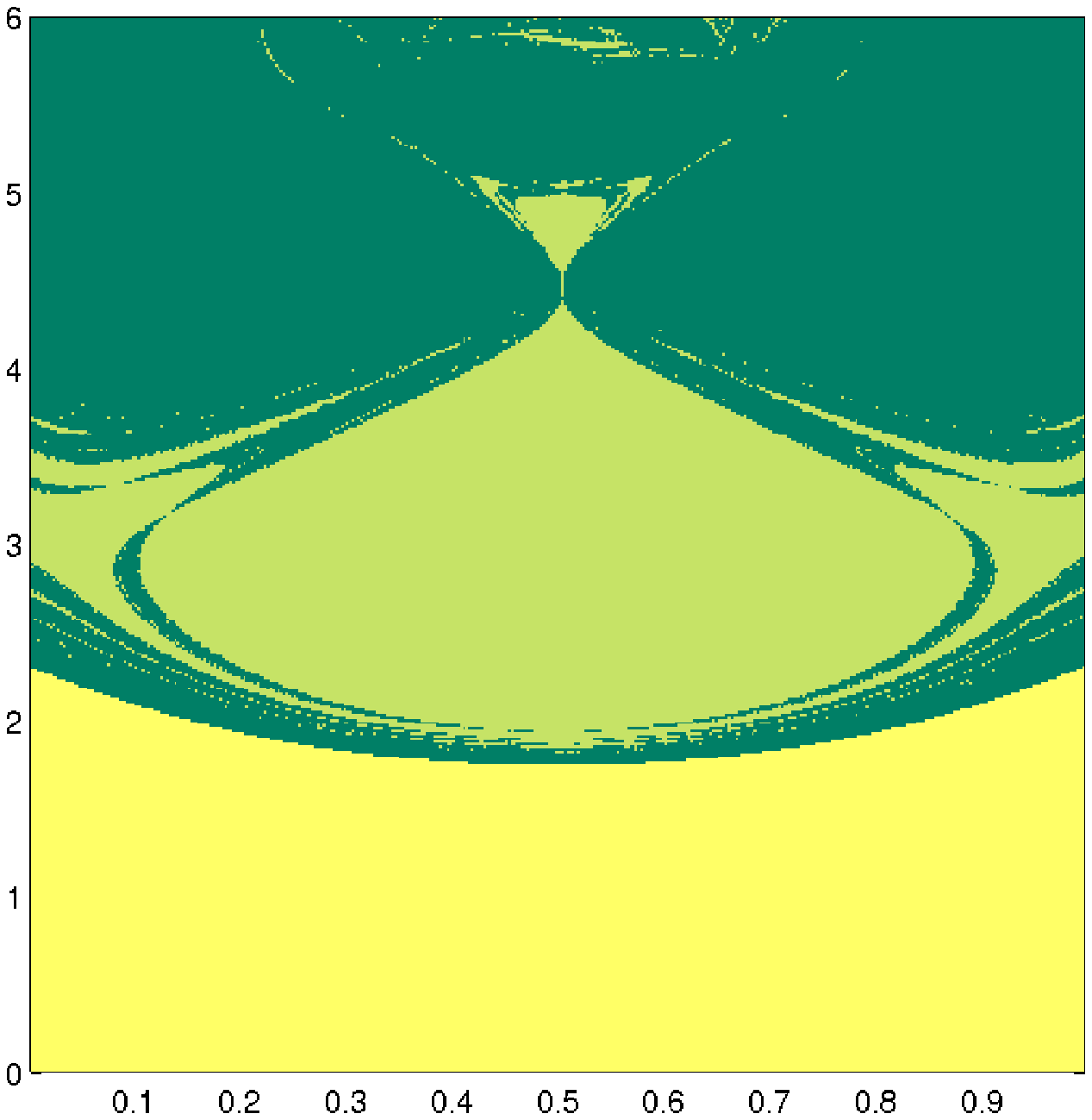}
\hfill
\includegraphics[width=0.45\textwidth]{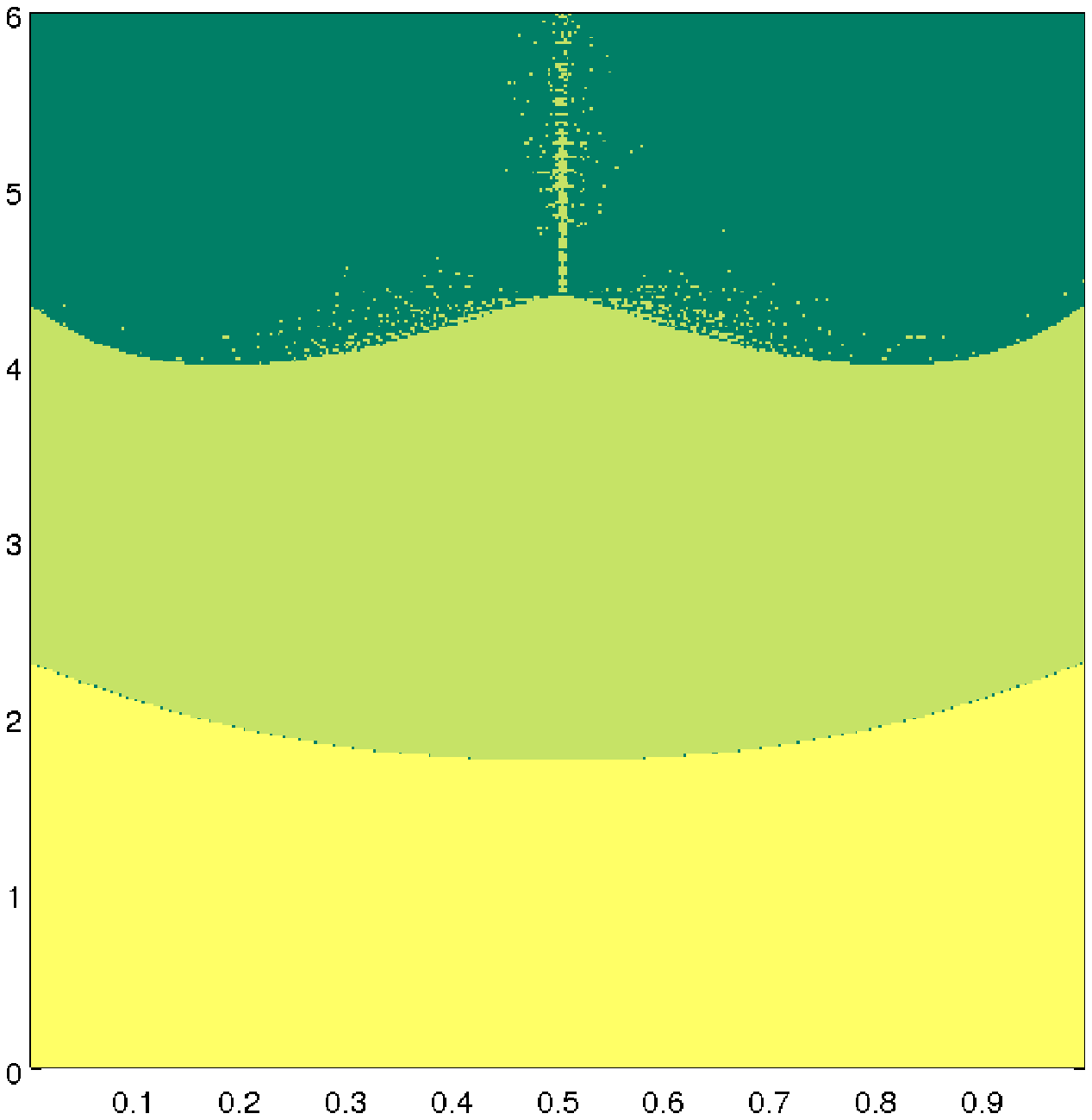}
\caption{The Newton-Galerkin method without (left) and with (right) step size control ($\tau = 0.1$).}
\label{bratuclassic}
\end{figure}

Table~\ref{perf33} is based on the information of $10^4$ initial guesses of type \eqref{initial} with $ \alpha_{j} \in [0,3] $. Note that the larger average iteration number in the adaptive approach comes from the fact that the classical Newton-Galerkin method breaks down for initial guesses within the dark green shaded part and therefore does not reach the maximal number of iterations. However, note that, employing a step size control procedure, increases the number of convergent initial guesses $u_{(i,0)}$ remarkably.

\begin{table}[htp]
\caption{Performance data for Example~\ref{ex:ode2} for $  10^4 $ initial guesses of type \eqref{initial}.}
\begin{center}
\begin{tabular}{@ {}*{5}{ l}@ {}}\toprule
 & $\text{Step size}\equiv 1$  &  $ \text{Adaptive}  \ \tau = 0.1 $  \\
 \midrule
 Average nr. of iterations & $10$ & $14$ \\
 Average step size & $1$ & $0.625$  \\
 \% of convergent iterations  & $83.5\%$ & $98.5\%$ \\
 Average rate $ \tilde{\rho}$ & $1.9$ & $1.2$ \\
 \bottomrule
\end{tabular}
\end{center}
\label{perf33}
\end{table} 
\end{example}

\subsection{A PDE Boundary Value Example} We close this application section with a partial differential equation example.

\begin{example}\label{ex:pde} Consider the boundary value problem
\begin{equation}
\label{pde}
\left\{ \begin{aligned}
& \Delta u + u^3=0 \ \text{in} \ \Omega, \\
& u=0, \ \text{on} \ \partial \Omega,
\end{aligned} \right.
\end{equation}
where $ \Omega=[0,1]^{2} $ is the unit square in~$\mathbb{R}^2$. Again, we are interested in three particular solutions $ \left\{u_{0},u_{+},u_{-}\right\} $, which are globally zero, positive, and negative on~$\Omega$, respectively.
\begin{figure}
\includegraphics[width=0.45\textwidth]{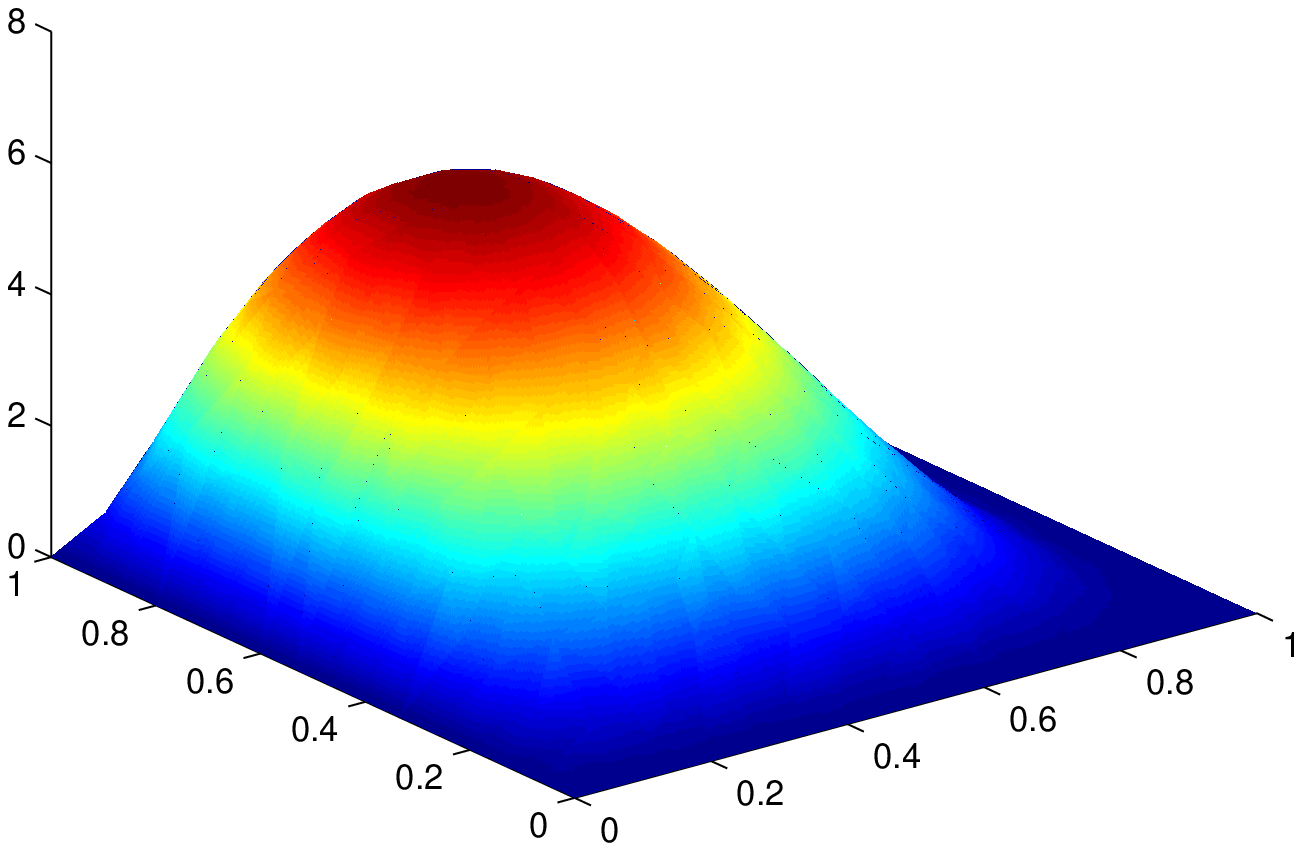}
\hfill
\includegraphics[width=0.45\textwidth]{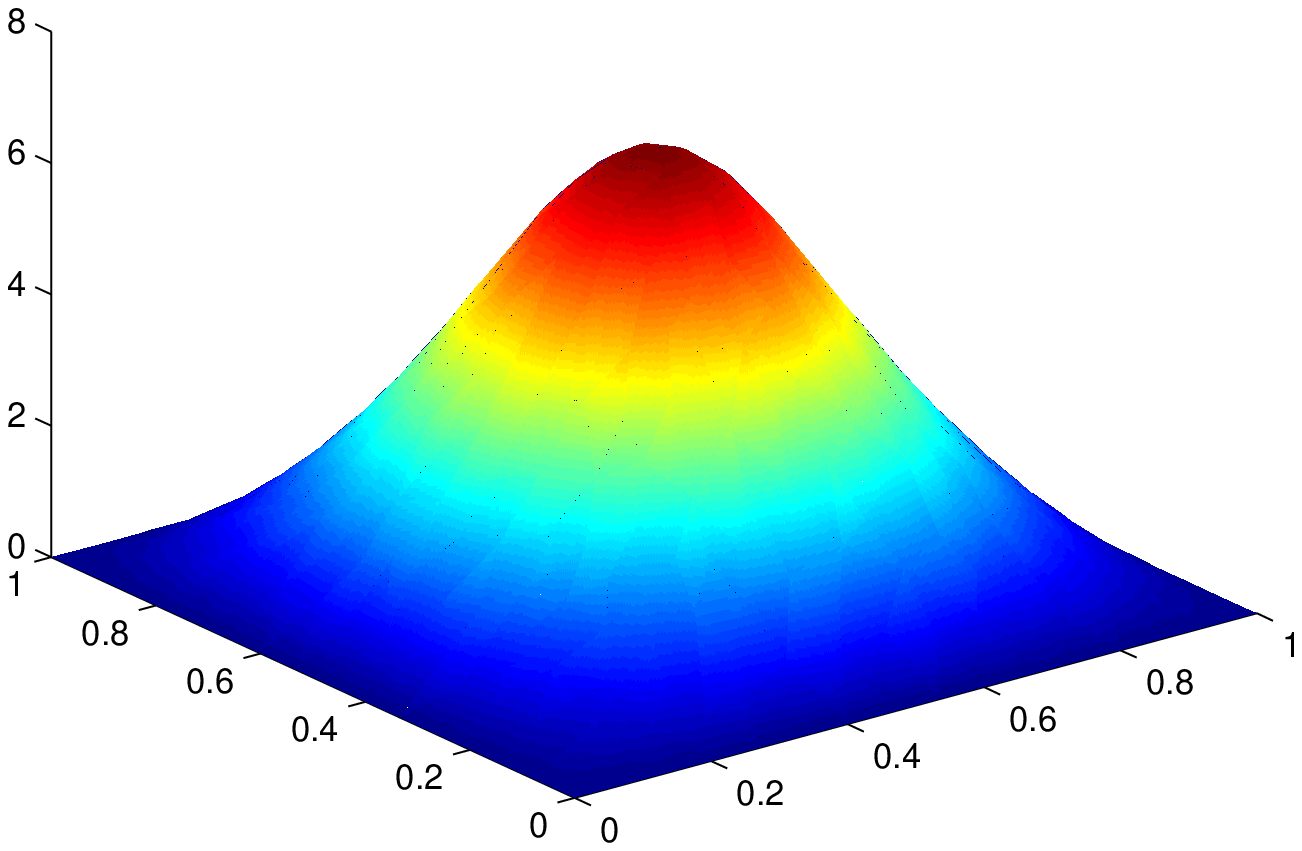}
\caption{Example of an initial guess from~\eqref{initpde} (left), and unique positive solution~$u_+$ of \eqref{pde} (right).}
\label{reference}
\end{figure}

Consider the hill-shaped functions
\begin{equation}
\phi_{(k,j,n)}(x,y)=\left(\frac{x}{x+\varepsilon}\right)^{k}\left(\frac{y}{y+\varepsilon}\right)^{j}\left(\frac{1-x}{1-x+\varepsilon}\right)^{n-k}\left(\frac{1-y}{1-y+\varepsilon}\right)^{n-j},
\end{equation}
with~$\varepsilon=\nicefrac1n$. Then, define the initial guesses for the Newton-Galerkin iteration (see Figure~\ref{reference}) as follows: For a fixed $n\in \N $, and $ k,j\in\{1,\ldots, n-1\} $, $ i \in \{-c,-c+\frac{1}{n},\ldots,c-\frac{1}{n},c\} $, with $ c\in\R $, we set 
\begin{equation}
\label{initpde}
u_{i,k,j,n}=\frac{i}{\norm{\phi_{(k,j,n)}}_{L^{\infty}(\Omega)}} \phi_{(k,j,n)}.
\end{equation}  

%
%
%
%

In Figure~\ref{Pdefractal} we show (finite dimensional subsets of) the attractors of the Newton-Galerkin method without step size control by sampling $10^6$ initial guesses (for $c=8$). As in the ODE case the dark-green shaded parts indicate the initial values which are not convergent to any of the three solutions of Figure~\ref{Pdefractal}. For the sake of clarity, we extract three horizontal slices from these plots, namely the one in the middle, at a quarter and on top of the cubes and display them in Figure~\ref{Pdefractal} with resolution $ 500\times 500 $. One can clearly see the chaotic behavior of the classical Newton-Galerkin method; indeed, there are again a large number of initial guesses which do not converge to the closest solution (as in the ODE-case we call an approximate solution close to the exact solution of \eqref{pde} if it is close in the mean, that is, in the integral sense). In addition, we present the basins of attraction based on step size control with~$\tau=0.1 $. As in the previous examples step size control is able to tame the chaotic behavior of the classical Newton method. Moreover, the boundaries of the three different basins of attraction are resolved, and the domain of attraction for the three solutions under consideration is considerably enlarged in the given range. 

\begin{figure}
\includegraphics[width=0.45\textwidth]{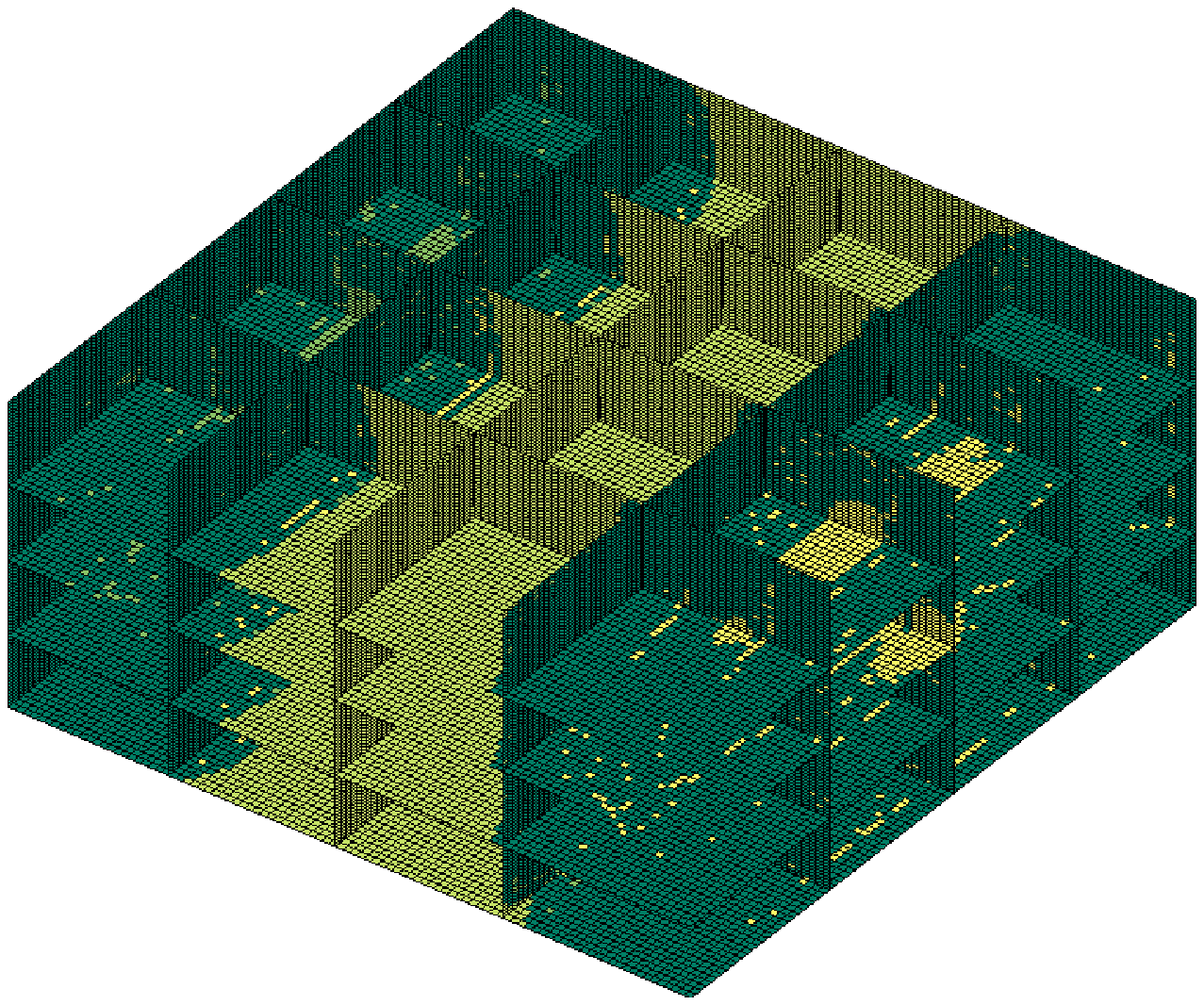}
\hfill
\includegraphics[width=0.45\textwidth]{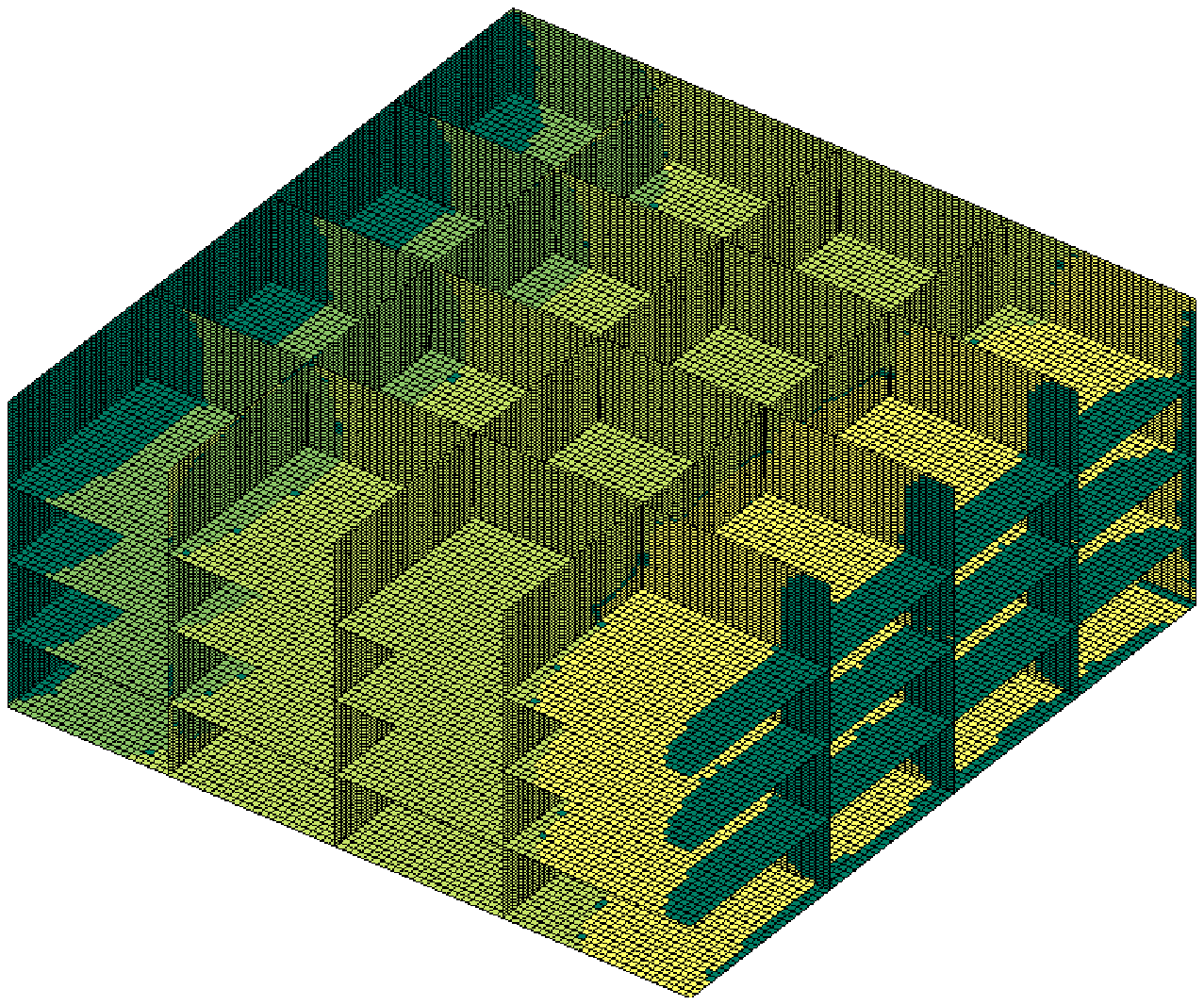}
\caption{Newton-Galerkin method with (left) and without (right) step size control (with~$\tau=0.1$).}
\label{Pdefractal}
\end{figure}

\begin{figure}
\includegraphics[width=0.45\textwidth]{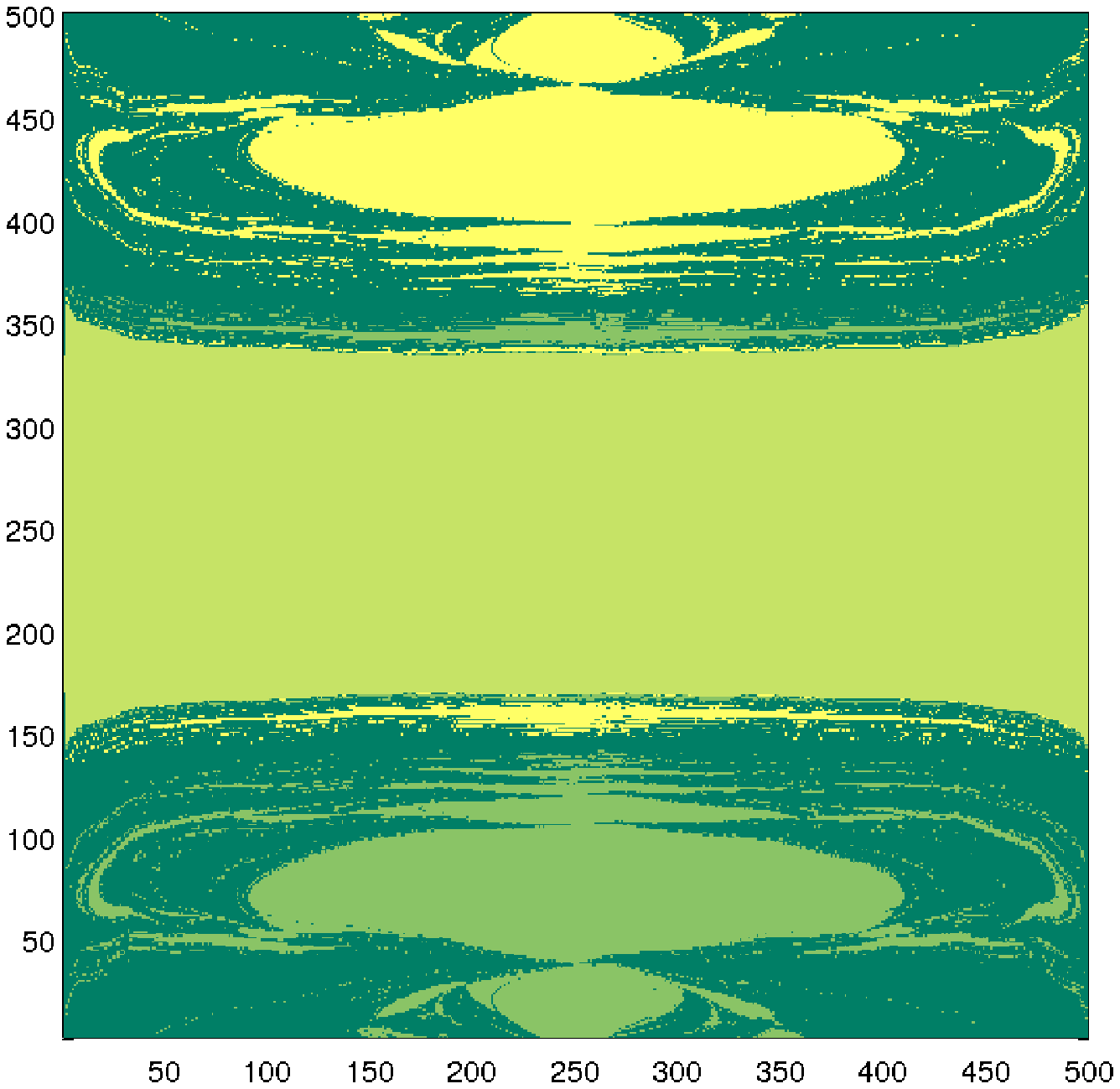}
\hfill
\includegraphics[width=0.45\textwidth]{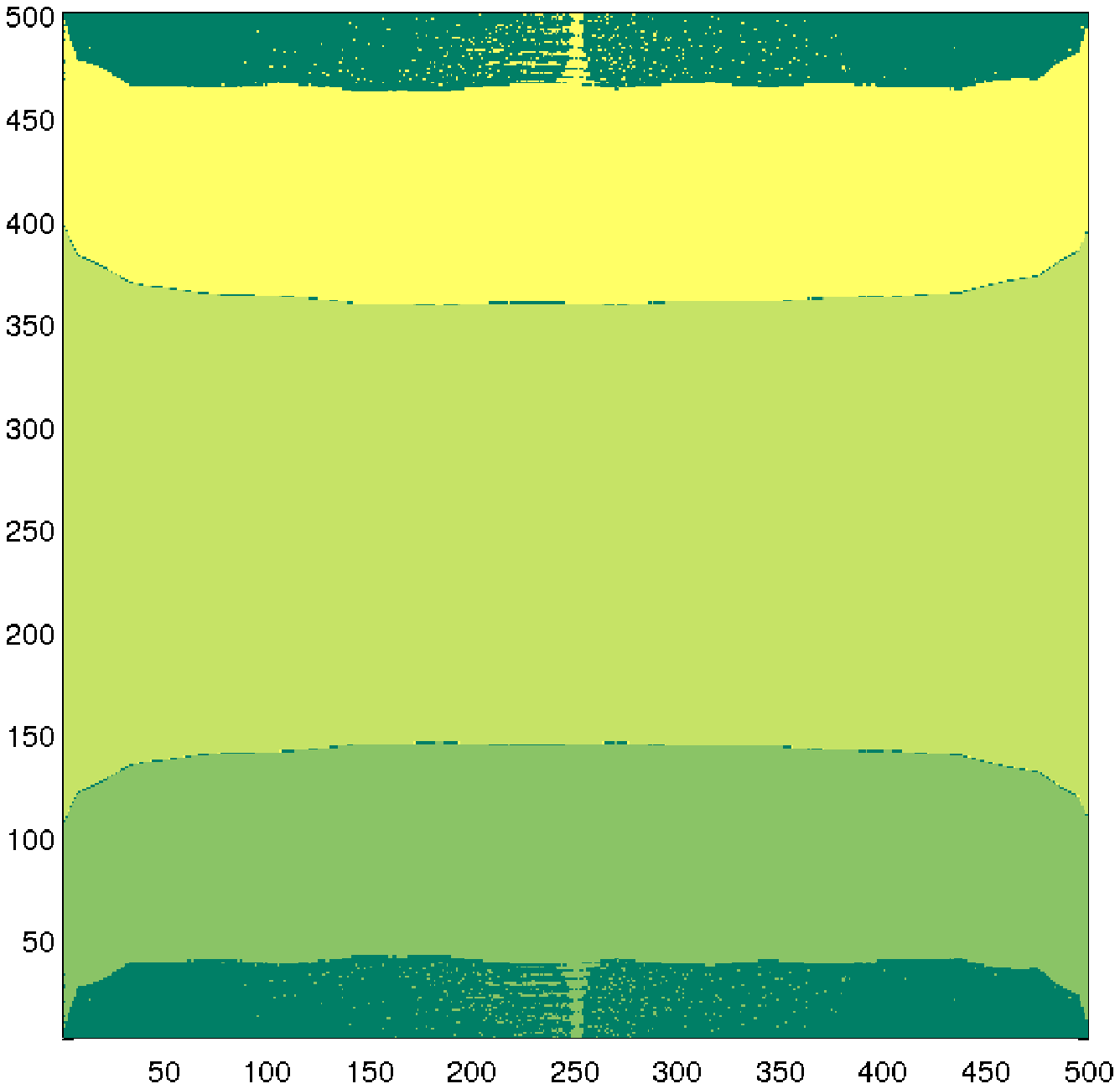}
\includegraphics[width=0.45\textwidth]{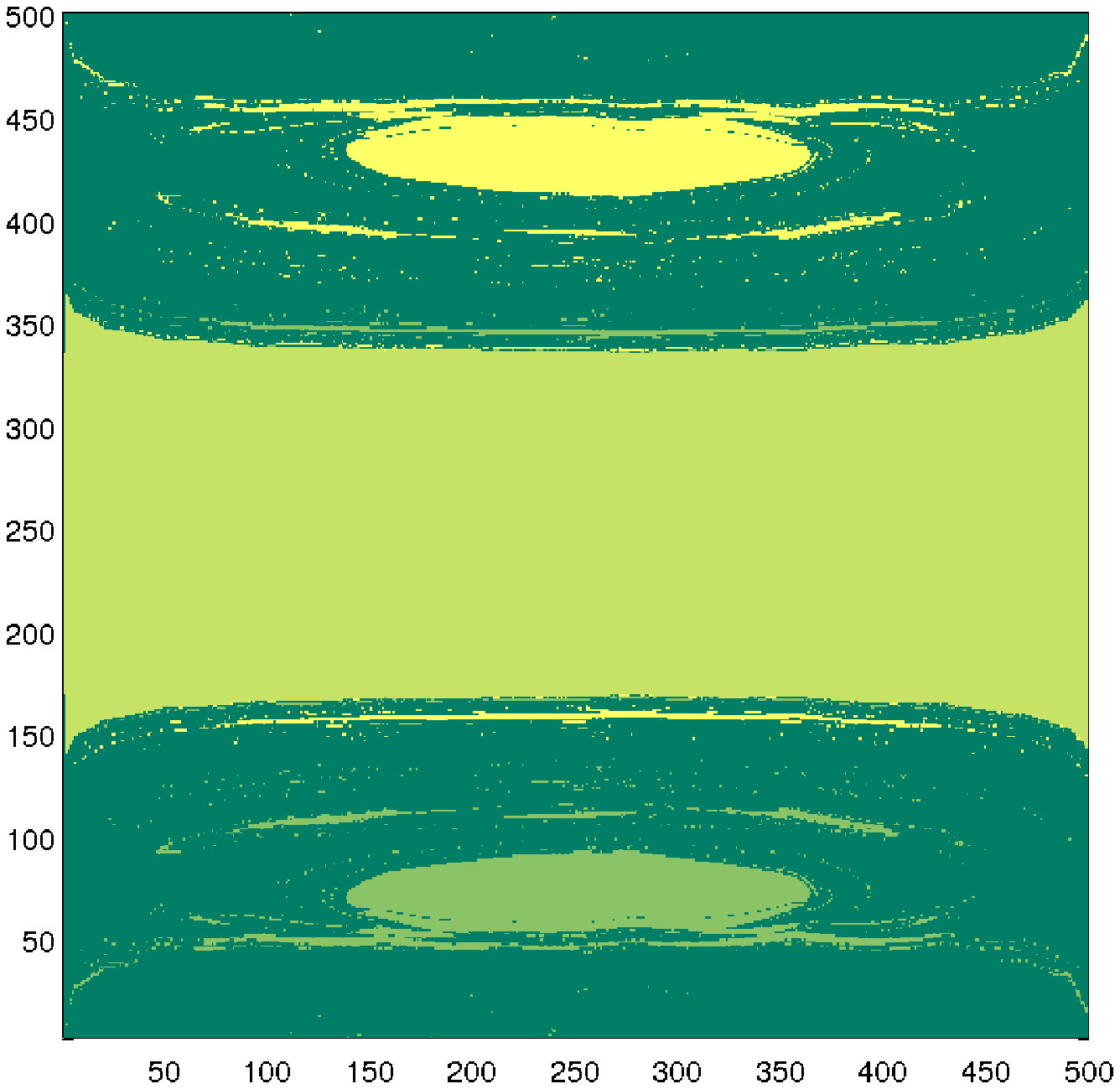}
\hfill
\includegraphics[width=0.45\textwidth]{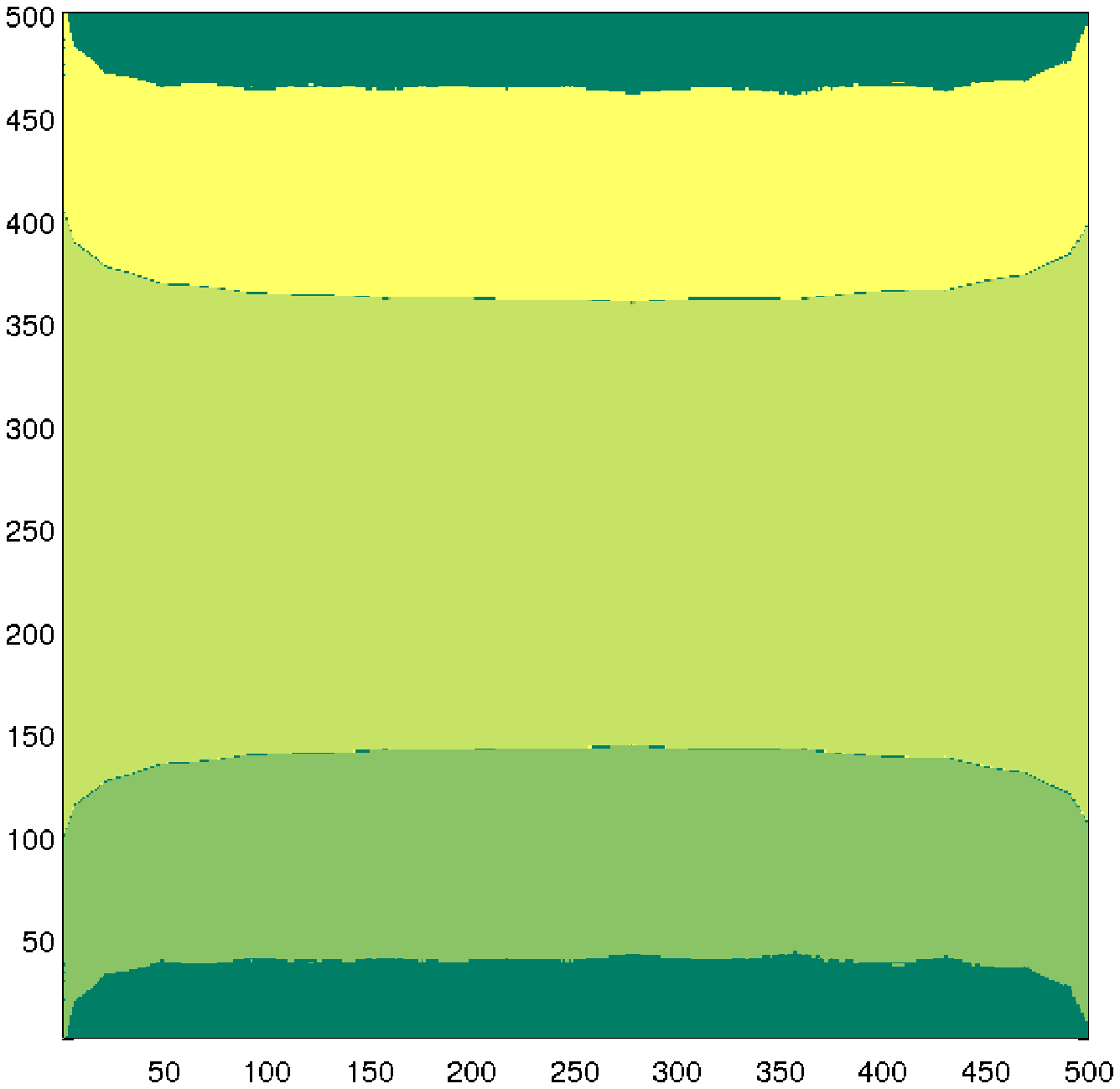}
\includegraphics[width=0.45\textwidth]{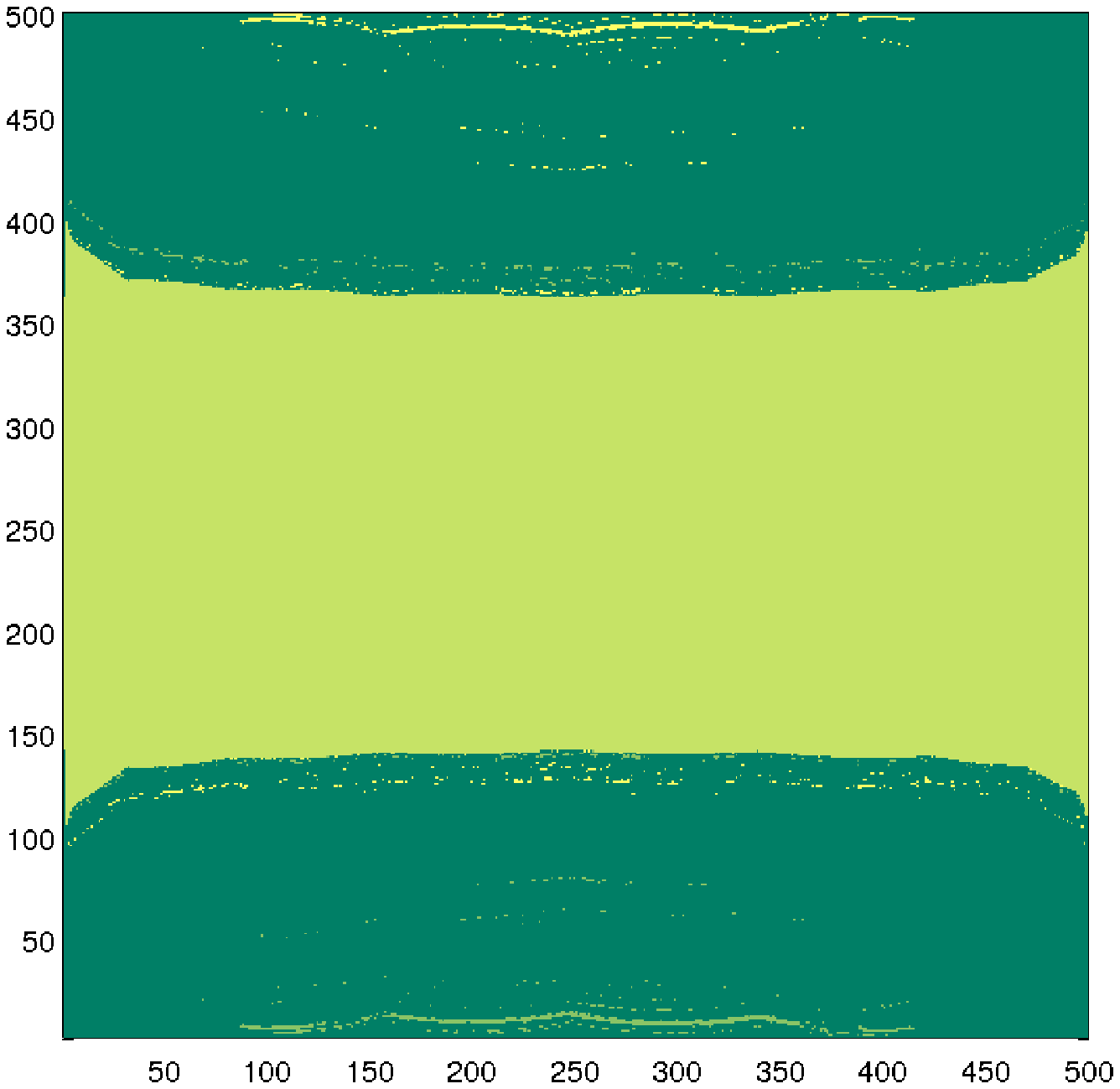}
\hfill
\includegraphics[width=0.45\textwidth]{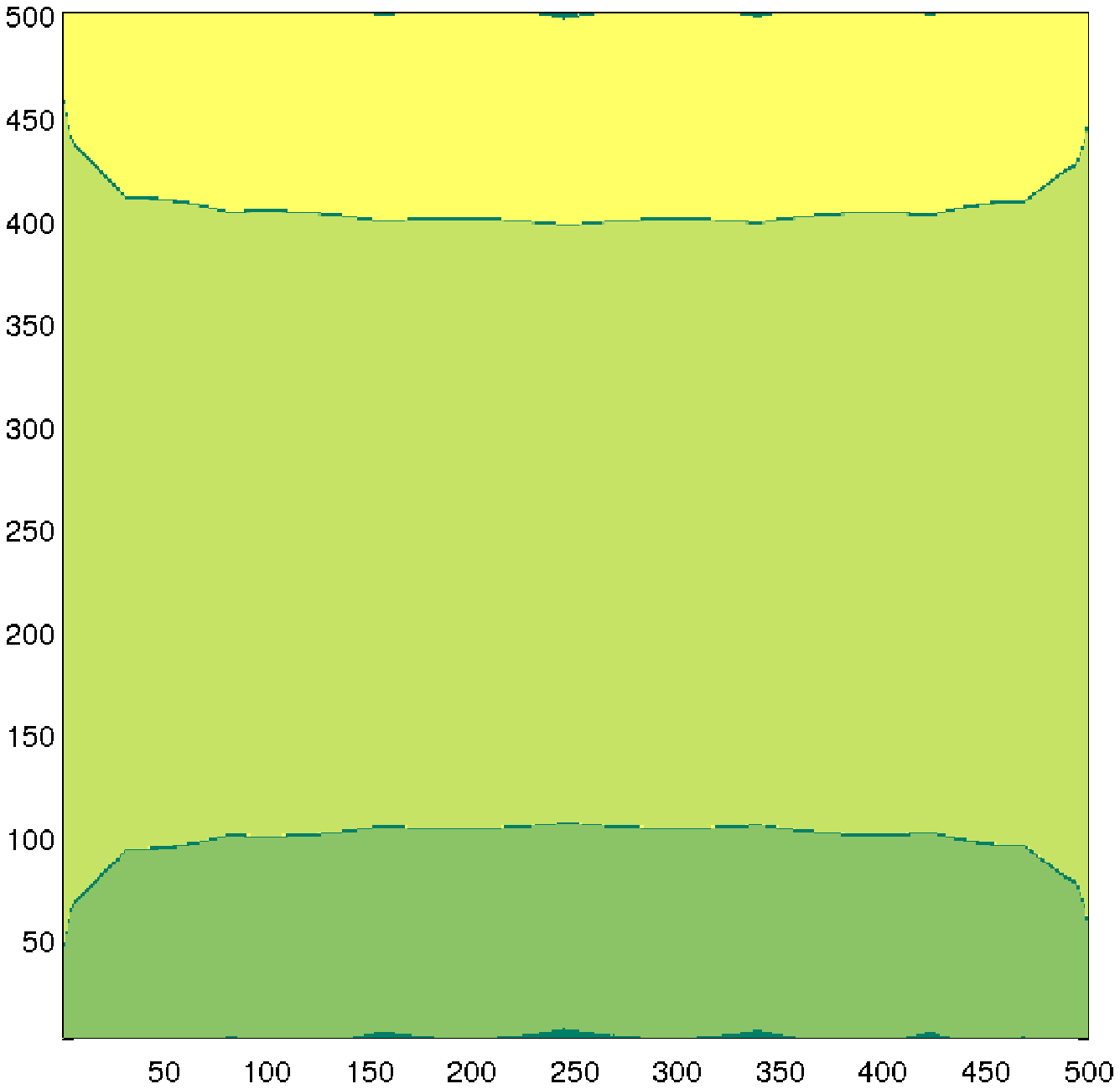}
\caption{Three slices without (left) and with (right) step size control (with~$\tau =0.1 $).}
\label{slices}
\end{figure}

\end{example}

\section{Conclusions}
In this paper we have introduced an adaptive Newton method for (nonlinear) operator equations, $\F(x)=0$, in Banach spaces. While adaptive Newton methods are popular instruments in the area of numerical optimization, our approach makes use of the dynamical system character of the continuous Newton method, $\dot{x}=\NF\F(x)$. Indeed, this system can be seen as a preconditioned version of the system~$\dot{x}=\F(x)$ by~$(\F')^{-1}$. It has, on the one hand, the very favorable property of all zeros being attractive, on the other hand, however, singularities in~$\F'$ may cause the associated discrete system to exhibit chaotic behavior. In order to tame the chaos of the discrete Newton flow, we have proposed a simple, prediction-type, adaptive step size control procedure whose purpose is to follow the flow of the continuous system to a reasonable extent, i.e., in particular, by avoiding to switch between different attractors. We have tested our method in the context of algebraic systems and of finite element discretizations for boundary value problems. The goal of our experiments was to demonstrate empirically that the proposed scheme is indeed capable of taming the chaotic regime of the traditional Newton-Raphson method, at least in the available setting of two-dimensional graphical representations. Our experiments strongly indicate that the adaptive method in this paper performs very well for the examples considered here: in particular, the graphics reveal that fractal attractor boundaries are being smoothed out, high convergence rates can be retained, and the domains of convergence can be enlarged. Our future research will focus on the combination of the proposed approach with adaptive discretization methods for high- or even infinite-dimensional problems.

\bibliographystyle{plain}
\bibliography{references}

\begin{thebibliography}{10}

\bibitem{5}
P.~Deuflhard.
\newblock {\em Newtons method for nonlinear problems}.
\newblock Springer Ser. Comput. Math., 2004.

\bibitem{DSB95}
M.~Drexler, I.~J. Sobey, and C.~Bracher.
\newblock On the fractal characteristics of a stabilised {N}ewton method.
\newblock Technical Report NA-95/26, Computing Laboratory, Oxford University,
  1995.

\bibitem{epureanu:102}
B.~I. Epureanu and H.~S. Greenside.
\newblock Fractal basins of attraction associated with a damped {N}ewton's
  method.
\newblock {\em SIAM Review}, 40(1):102--109, 1998.

\bibitem{fatou}
P.~Fatou.
\newblock Sur les \'{e}quations fonctionnelles ({F}rench).
\newblock {\em Bull.~Soc.~Math.~France}, 47:161--271, 1919.

\bibitem{julia}
G.~Julia.
\newblock M\'emoire sur l'iteration des fonctions rationnelles ({F}rench).
\newblock {\em J.~Math. Pure et Appl.}, 8:47--245, 1918.

\bibitem{6}
P.~Korman and Y.~Li.
\newblock Generalized averages for solutions of two-point {D}irichlet problems.
\newblock {\em J. Math. Anal. Appl.}, 239(2):478--484, 1999.

\bibitem{10}
T.~Kunio.
\newblock Continuous {N}ewton-{R}aphson method for solving an underdetermined
  system of nonlinear equations.
\newblock {\em Nonlinear analysis, theory, methods and applications},
  3(4):495--503, 1979.

\bibitem{11}
T.~Kunio.
\newblock A geometric method in nonlinear programming.
\newblock {\em Journal of optimization and applications}, 30(2):181--210, 1980.

\bibitem{neuberger}
J.~W. Neuberger.
\newblock Continuous {N}ewton's method for polynomials.
\newblock {\em The Mathematical Intelligencer}, 21(3):18--23, 1999.

\bibitem{2}
J.W. Neuberger.
\newblock Integrated form of continuous {N}ewton's method.
\newblock {\em Lecture notes in Pure and applied. math.}, 234:331--336, 2003.

\bibitem{3}
J.W. Neuberger.
\newblock The continuous {N}ewton's method, inverse functions and {N}ash
  {M}oser.
\newblock {\em Amer. Math. Monthly}, 114:432--437, 2007.

\bibitem{nocedal}
J.~Nocedal and S.~J. Wright.
\newblock {\em Numerical optimization}.
\newblock Springer Series in Operations Research and Financial Engineering.
  Springer, New York, second edition, 2006.

\bibitem{peitgen}
H.-O. Peitgen and P.~H. Richter.
\newblock {\em The Beauty of Fractals}.
\newblock Springer Verlag, 1986.

\bibitem{ScWi11}
H.~R. Schneebeli and T.~P. Wihler.
\newblock The {N}ewton-{R}aphson method and adaptive {ODE} solvers.
\newblock {\em Fractals}, 19(1):87--99, 2011.

\bibitem{smale}
S.~Smale.
\newblock On the efficiency of algorithms of analysis.
\newblock {\em Bull.~Amer.~Math.~Soc. (N.S.)}, 13(2):87--121, 1985.

\bibitem{Varona02}
J.~L. Varona.
\newblock Graphic and numerical comparison between iterative methods.
\newblock {\em Math. Intelligencer}, 24(1):37--46, 2002.

\bibitem{12}
E.~Zeidler.
\newblock {\em Nonlinear {F}unctional {A}nalysis and its {A}pplications. {I}}.
\newblock Springer-Verlag, New York, 1986.
\newblock Fixed-point theorems.

\end{thebibliography}

\end{document}